\theoremstyle{definition}
\newtheorem{assumption}{Assumption}
\def\email#1{\it #1\par}
\providecommand{\otherindexspace}[1]{}
\newtheorem{theorem}{Theorem}[section]
\newtheorem{lemma}[theorem]{Lemma}
\newtheorem{proposition}[theorem]{Proposition}
\newtheorem{remark}[theorem]{Remark}
\newtheorem{definition}[theorem]{Definition}
\newtheorem{corollary}[theorem]{Corollary}
\DeclareMathOperator{\supp}{supp}
\DeclareMathOperator{\esssup}{esssup}
\DeclareMathOperator*{\argmax}{arg\,max}
\definecolor{falured}{rgb}{0.5, 0.09, 0.09}
\numberwithin{equation}{section}
\title{Mean-field games of optimal stopping: \\a relaxed solution approach}
\author{G\'eraldine Bouveret \thanks{Smith School, University of Oxford, South Parks Road, Oxford, OX1 3QY, United Kingdom, Email: \email  geraldine.bouveret@smithschool.ox.ac.uk}\and Roxana Dumitrescu \thanks{Department of Mathematics, King's College London, Strand, London, WC2R 2LS, United Kingdom, Email: \email roxana.dumitrescu@kcl.ac.uk} \and Peter Tankov \thanks{ENSAE Paris, 5 avenue Henry Le Chatelier, 91120 Palaiseau, France, Email: \email peter.tankov@polytechnique.org}}
\date{}
\begin{document}
\maketitle
\begin{abstract}
We consider the mean-field game where each agent determines the
optimal time to exit the game by solving an optimal stopping problem
with reward function depending on the density of {the state processes of} agents still
present in the game. We place ourselves in the framework of relaxed
optimal stopping, which amounts to look{ing} for the optimal occupation
measure of the stopper rather than the optimal stopping time. This
framework allows us to prove the existence of a relaxed
Nash equilibrium and the uniqueness of the associated value of the
representative agent under mild assumptions. Further, we prove a rigorous relation between
relaxed Nash equilibria and the notion of mixed solutions introduced in earlier
works on the subject, and provide a criterion, under which the
optimal strategies are pure strategies, that is, behave in a similar
way to stopping times. Finally, we present a numerical method
for computing the equilibrium in the case of potential games and show its convergence.

\end{abstract}

\textbf{Keywords:} Mean-field games, optimal stopping, relaxed solutions,
  infinite-dimensional linear programming\\

 \textbf{AMS:} 91A55, 91A13, 60G40

\section{Introduction}

\quad The purpose of this paper is to study a large-population
stochastic differential game of optimal stopping, where each agent
finds the optimal time to exit the game by solving an optimal stopping
problem with instantaneous reward function depending on the density of
{the state processes of} agents still present in the game. To motivate the mean-field game
(MFG) framework, we first provide a formulation with
a finite number of agents. Assume that {each agent} $i=1,2,...,{N}$ {has a private state process} $X^{i}$, {whose dynamics} {is} given by the stochastic differential equation (SDE),
\begin{align*}
dX^i_t = \mu(t,X^i_t) dt + \sigma(t,X^i_t) dW^i_t,
\end{align*}
where the Brownian motions $W^i$, $i=1,\dots,N$ are independent.

The objective of each agent $i$ is to maximize over all possible
stopping times $\tau$ the reward functional
$$
\mathbb E\left[\int_0^\tau e^{-\rho t} \tilde f(t,X^i_t, m^n_t)
dt + e^{-\rho (\tau \wedge T)} g(\tau\wedge T, X^i_{\tau \wedge T})\right ],
$$
with
\textcolor{black}{
$$
m^N_t(dx) = \frac{1}{N} \sum_{i=1}^N \delta_{X^i_t}(dx) \mathbf 1_{t
\leq {\tau^{i}}},
$$
where $\tau^{i}$ represents the optimal stopping time of the agent $i$.}
{A}gents have the same state process coefficients and objective functions, and the optimal stopping problems are coupled only {through} the empirical measure $m^N$. Since the objective functions are coupled, it is natural to look for Nash equilibria.

Stochastic differential games with a large number
{$n$} of players are rarely tractable. The MFG approach amounts to
looking for a Nash equilibrium in the limiting regime, when the
number of players {$n$} goes to infinity. Following this approach, we
study the \textit{{MFG} of optimal stopping}, which can be
seen as an {infinite}-agent version of the above game. {In this
  approach, we first solve for a fixed {\color{black}flow of sub-probability
  measures $(m_t)_{0\leq t\leq T}$}} the optimal stopping problem
$$
\max_\tau \mathbb E\left [\int_0^\tau e^{-\rho t} \tilde f(t,X_t, m_t)
dt + e^{-\rho (\tau \wedge T)} g(\tau\wedge T, X_{\tau \wedge T})\right],
$$
with
$$dX_t = \mu(t,X_t) dt + \sigma(t,X_t) dW_t.$$
Then, {given $\tau^{m,x}$ the optimal stopping time for the agent with initial condition $x$, and the initial measure $m_0$, we look for} the {\color{black}flow
of
measures}  $(m_t)_{0\leq t \leq T}$ such that
\begin{align}
m_t(A) = \int m_0(dx)\mathbb P[X^{x}_t\in A; \tau^{m,x} >t],
\quad A \in \mathcal B(\mathbb R^d), \ t\in [0,T].\label{part2}
\end{align}

\textcolor{black}{Note that in $\eqref{part2}$, the probability is
  \textit{not} a conditional but joint probability.}
A solution (Nash equilibrium) of the MFG problem
is the {\color{black}flow of measures} $(m_t)_{0\leq t\leq T}$, which is the fixed point of the mapping defined
by the right-hand side of \eqref{part2}.

In this paper, we  prove {the} existence of the Nash
equilibrium {for} the MFG problem and the uniqueness of the associated value of the
representative agent. {To this aim, we use} the \textit{relaxed solution}
approach, which converts the stochastic optimal stopping problem into
a linear programming problem over a space of
measures. The decision variable is no longer the \textit{optimal
  stopping time}, but rather the distribution of the killed state process.

Introducing relaxed solutions facilitates existence proofs: the
existence is proven by using Fan-Glicksberg's fixed\textcolor{black}{-}point theorem. The
relaxed solutions are related to {the} \textit{mixed strategies} introduced
in {\cite{bertucci2017optimal}},  and we establish a rigorous relation
between the two. Finally, we propose an implementable numerical scheme
for computing a Nash equilibrium in the case of potential games, and
show its convergence. An application of these results to a resource{-}sharing problem will be developed in a companion paper.

{MFG} theory has been  introduced by P.-L. Lions and
J.-M. Lasry in a series of papers
\cite{lasry2006jeux,lasry2006jeux1,lasry2007mean} using an analytic
approach  and studied independently at about the same time by \cite{huang2006large}. Later on, a
probabilistic approach has been  developed in a series of papers by
Carmona, Delarue, and their co-authors \cite{carmona2013probabilistic,carmona2013mean,carmona2018probabilistic,carmona2016mean,lacker2015mean} and so on.

The analytic method consists in finding the Nash equilibria through a
coupled  system of nonlinear partial differential equations: a
Hamilton-Jacobi-Bellman equation (backward in time), which describes
the optimal control problem of the \textit{representative} agent when
the distribution $\mu$ is given, and a Kolmogorov-type equation
(forward in time) which describes the evolution of the density under
the optimal control. In the probabilistic approach, the system of PDEs
is replaced by a coupled system of forward-backward stochastic
differential equations of McKean-Vlasov type.

{MFGs} of optimal stopping have been considered in the
literature only very recently, and our understanding of this type of
games remains limited. \cite{nutz2018mean}  considers a MFG problem
where the agents interact through the proportion of players that have
already stopped and each agent solves a specific optimal stopping problem of the form
$$\sup_\tau \mathbb{E}\left[\exp\left(\int_0^\tau r_sds\right)\textbf{1}_{\{\theta>\tau\}\cup{\{}\theta=\infty{\}}}\right].$$
{There}, the process $r$ creates an incentive for the agent to stay in
the game, while the possibility of default at a random time $\theta$
creates an incentive to leave. The distribution of $\theta$ depends on
the proportion $\rho_t$ of players who have already stopped in such {a}
way that the departure of other agents creates an incentive for the
agent under consideration to leave as well {\color{black}(this type of game is known
as preemption game).}
In a similar spirit but with greater generality,
\cite{carmona2017mean} consider {MFGs} of timing, whose
formulation is motivated by a dynamic model of bank run in {a continuous}
time setting. As in \cite{nutz2018mean}, the payoff of each agent
depends on the proportion of players who have already stopped, and the
departure of players creates an additional incentive for the players
still in the game to leave as well. Both papers (\cite{nutz2018mean}
and \cite{carmona2017mean}) adopt a purely probabilistic
approach.

In contrast to these two references,  \cite{bertucci2017optimal}
studies a {MFG} of optimal stopping, which is similar to the
one considered in this paper, {i.e.} where the interaction takes place through the
density of states of agents remaining in the game, rather than the
proportion of players that have already stopped. In this reference and
in our
paper, the departure of players creates an incentive for the players still in the game to
stay, {\color{black}a type of behavior known as 'war of attrition',} which is characteristic of resource-sharing
problems. In \cite{bertucci2017optimal} the state process has constant
coefficients and evolves in a bounded domain, and the {MFG}
of optimal stopping is solved through a coupled system of a
Hamilton-Jacobi-Bellman variational inequality and a Fokker-Planck
equation.

\cite{bertucci2017optimal} makes a number of significant contributions to the
literature. {In particular, he provides} an example of non-existence of
\textit{Nash equilibrium with pure strategies} in optimal stopping
MFG, and {introduces} the notion of mixed strategies in this context, for which
existence may be recovered. However, the existence proofs in this
paper are not fully clear to us.\footnote{To be precise, the weak convergence of the flow
  $m^\varepsilon$ established in the proof of existence of a mixed solution in both stationary and
  parabolic cases (Theorems 1.6 for the stationary case and Theorem
  2.1 for the parabolic case) is not sufficient to conclude that
  $\int f(m^\varepsilon) dm^\varepsilon$ converges.
}
To clarify the existence question and solve the MFG {of} optimal stopping
problem in greater generality (with variable coefficients and in
unbounded domains), we adopt, {in this paper,} a completely different
approach, based on the relaxed solution technique.

The approach of \textit{relaxed solutions/controls} is a relatively popular method of
compactification of stochastic control problems to establish existence of
solutions, which comes in several different flavors. In, e.g.,
\cite{el1987compactification} and a number of other papers, the
authors reformulate the control problem as a relaxed controlled
martingale problem. A similar approach is used by
\cite{lacker2015mean} in the context of (standard) MFG.  In the
second approach, especially popular for infinite-horizon and ergodic
control problems, the control problem is reformulated as a linear
programming problem on the space of measures, and one looks for the
joint occupation density of the state process and the control. {We refer the reader to, e.g.,
\cite{buckdahn2011stochastic} and \cite{stockbridge1990time}, for a
link between these two formulations}. \textcolor{black}{The literature on \textit{relaxed
  solutions} for individual optimal stopping problems is quite
limited. \cite{SC2002} propose a linear programming formulation for
the infinite-horizon optimal stopping of a Markov diffusion process,
using two measures: the occupation measure of the process and the
joint distribution of the stopping time and the stopped
process. \cite{HS} extend this result to processes with singular
components such as reflected diffusions. In contrast to these two
references, in our paper we propose a different formulation based only
on the occupation measure of the process killed at the stopping time. 
To the best of our
knowledge, ours is the first paper which uses \textit{relaxed
  solutions} in order to solve optimal stopping problems of mean-field type.}

\textcolor{black}{The literature on numerical schemes for MFG
  is well developed in the case of MFG
  with regular controls (see e.g. \cite{BC2015}), but very little is
  known in the case of MFG with optimal stopping. In the latter case
  \cite{B2018} proposes an algorithm, which works only under the
  assumption that the instantaneous reward function is
  \textit{strictly monotonic} with respect to the measure, which is quite restrictive for applications. We propose instead a different algorithm, which allows to consider the case of a \textit{non-strictly} monotonic reward function.}

The structure of the paper is the following. {I}n Section 2, we present
the model and give the mean-field formulation of the problem. In
Section 3, we introduce the relaxed formulation of the single-agent
optimal stopping problem and establish the existence of a relaxed
solution. In Section 4, we study the relaxed optimal stopping problem
in the MFG context and give conditions for the existence of a Nash
equilibrium and uniqueness of {the} Nash equilibrium {value}. In Section 5,
we establish the relation between the relaxed and strong formulation of both single-agent and MFG optimal stopping problems. {Finally, in} Section 6, we present the numerical algorithm and provide convergence results.
%

\section{The model}\label{se model}

We fix a terminal time horizon $T<\infty$, and introduce a possibly
unbounded open domain $\textcolor{black}{{\mathcal O}\subseteq \mathbb R^d}$ on which the state
processes of the agents will evolve. The space of bounded positive
measures on ${\mathcal O}$ will be denoted by $\mathcal{M}({\mathcal O})$, and
the space of probability measures on $\mathcal O$ will be denoted by
$\mathcal P({\mathcal O})$. 
\textcolor{black}{In the sequel, any element $x \in \mathbb{R}^d$ will be identified to a column vector with $i$-th component $x^{i}$ and Euclidian norm $|\!|x|\!|.$ Similarly, for any matrix $A \in \mathbb{R}^{d\times K}$ we denote by  $|\!|A|\!|$ its Euclidian norm.}
\textcolor{black}{\paragraph{N-players game formulation} Consider $N$ agents whose states $X^i$, $i=1,\dots,N$} follow the
diffusion-type dynamics
\begin{align}
dX^i_t = \mu(t,X^i_t) dt + \sigma(t,X^i_t) dW^i_t,\quad X^i_0 = x^i\in
  \mathcal O,\label{sde}
\end{align}
where the \textcolor{black}{$K$-dimensional Brownian motions $W^i$, $i=1,\dots,N$ are independent} and
the coefficients $\mu$ and $\sigma$ satisfy the following assumption.
\begin{assumption}[\textbf{X-SDE}]
\textcolor{black}{The coefficients $\mu:[0,T] \times \mathcal{O} \mapsto
  \mathbb{R}^d$ and $\sigma:[0,T] \times \mathcal O \mapsto \mathbb{R}^{d \times K}$ are assumed to be Lipschitz
continuous in the second variable, uniformly in $t\in [0,T]$ and bounded.}
\end{assumption}
By classical results on SDEs, this assumption guarantees the
existence of a strong solution to \eqref{sde} satisfying

\textcolor{black}{$$\sup_{0\leq t\leq T} \mathbb E[|\!|X^i_t|\!|^p] <\infty, \textcolor{black}{\text{ for all } p \geq 1.}$$}


We
denote by $\mathcal L$ the infinitesimal generator of this process
$$
\textcolor{black}{\mathcal L f (t,x)= \nabla_X f(t,x)^\top\mu(t,x) + \frac{1}{2} Tr[\sigma^\top(H_Xf)\sigma],}
$$
\textcolor{black}{with $ \nabla_X f:=(\partial_{x_1}f,...,\partial_{x_d}f)^\top$, $H_Xf$ the Hessian matrix of $f$ with respect to $x$ and $Tr$ the trace operator.}

Each agent aims to determine the optimal stopping time $\tau_i$ valued in $[0,T]$ by
solving the optimal stopping problem
$$
\max_{\tau }\mathbb E\left[\int_0^{\tau\wedge \tau_{\mathcal O}^i} e^{-\rho t} \tilde f(t,X^i_t, m^N_t)
dt + e^{-\rho (\tau\wedge \tau_{\mathcal O}^i \wedge T)} g(\tau\wedge \tau_{\mathcal O}^i\wedge T, X^i_{\tau \wedge \tau_{\mathcal O}^i\wedge T})\right],
$$
where $\rho>0$ is a discount factor, $\tilde f: [0,T]\times
{\mathcal O}\times \mathcal M({\mathcal O})\to \mathbb R$ is the running reward function,
$g:[0,T]\times{\mathcal O}\to\mathbb R $ is the terminal reward, \textcolor{black}{$m^N_t$} is defined
by
\begin{align}\label{empiricalmeas}
m^N_t(dx) = \frac{1}{\textcolor{black}{N}} \sum_{i=1}^{\textcolor{black}{N}} \delta_{X^i_t}(dx) \mathbf 1_{t
\leq \textcolor{black}{\tau^{i}} \wedge \tau_{\mathcal O}^i},
\end{align}
\textcolor{black}{with $\tau^{i}$ a stopping time with respect to the filtration
generated by the Brownian motions of all agents, corresponding to agent $i$} and $\tau_{\mathcal O}^i$ the exit time from the domain $\mathcal O$ of agent $i$.
The assumptions on $\tilde f$ will be specified later, and $g$ is
assumed to belong to $C^{1,2}([0,T]\times {\mathcal O})$ and has
derivatives of order $1$ in $t$ and of orders $1$ and $2$ in $x$ of
polynomial growth in $x$ uniformly in $t$.
Letting $f(t,x,\mu) = e^{-\rho t} (\tilde f(t,x,\mu) - \rho g(t,x) +
\frac{\partial g}{\partial t} + \mathcal L g),$
the optimal stopping problem becomes (up to a constant),
\begin{align}\label{optimization}
\max_{\tau }\mathbb E\left[\int_0^{\tau\wedge \tau_{\mathcal O}^i} f(t,X^i_t, m^n_t)dt\right].
\end{align}

\textcolor{black}{We now formulate the notion of Nash equilibrium for the optimal stopping game with $N$ players. To this purpose,  \textcolor{black}{let $\mathcal{T}$ be the set of stopping times with respect to the filtration
generated by the Brownian motions of all agents, taking values between $0$ and $T$. Given} a strategy vector  ${\tau}:=(\tau^1, \tau^2,...,\tau^N) \textcolor{black}{\in \mathcal{T}^N}$ and an individual strategy $\sigma \textcolor{black}{\in \mathcal{T}}$, let  $[\tau^{-i}, \sigma]$ indicate the strategy vector
that is obtained from $\tau$ by replacing $\tau^{i}$, the strategy of player $i$, with $\sigma$.
\begin{definition}[Nash Equilibrium $N$-players game]
A strategy vector  ${\tau}:=(\tau^1, \tau^2,...,\tau^N) \textcolor{black}{\in \mathcal{T}^N}$ is called a \textit{Nash equilibrium for the $N$ players game}, if for every $i \in \{1,2,..,N\}$ and every $\sigma \textcolor{black}{\in \mathcal{T}}$, we have
$$J_N^{i}(\tau) \geq J_N^{i}([\tau^{-i}, \sigma]),$$
\end{definition}
where, for each $\theta \textcolor{black}{\in \mathcal{T}^N}$,
$$J_N^{i}(\theta):=\mathbb E\left[\int_0^{\theta^{i}\wedge \tau_{\mathcal O}^i} f(t,X^i_t, m^N_t)dt\right],$$
where $m_t^N$ is given by $\eqref{empiricalmeas}$ with $\tau^{i}$ replaced by $\theta^{i}$, for each $i$.}
\paragraph{MFG formulation}
In the limit of a large number
of agents, we expect, from the law of large numbers, that the
empirical measure \textcolor{black}{$m^N_t$} converges to a \textit{deterministic} limiting
distribution $m_t$ for each $t\in [0,T]$. The problem
of each agent therefore consists in finding the optimal stopping time
in the filtration generated by the individual noise of this agent
only, and it is sufficient to work on a probability space supporting a
single Brownian motion.

Let $(\Omega, \mathcal F, \mathbb P)$ be a probability space
supporting a \textcolor{black}{standard $K$-dimensional Brownian motion $W$}. We denote by $\mathbb F^W$
the natural filtration of $W$ completed with the sets of measure zero.
In the MFG formulation, the state of the representative
agent with initial value $x$ follows the dynamics
$$
dX^x_t = \mu(t,X^x_t) dt + \sigma(t,X^x_t) dW_t,
$$
where we write $X^x_\cdot$ as a shorthand for $X^{(0,x)}_\cdot$. As intimated in the introduction, the first step of the MFG approach consists in solving the following optimal stopping problem for the agent
\begin{align}
\max_{\tau \in \mathcal T^W([0,T])}\mathbb E\left[\int_0^{\tau\wedge
  \tau^{x}_{\mathcal O}} f(t,X^x_t, m_t)\label{optimaltime}
dt \right],
\end{align}
where $ \mathcal T^W([0,T])$ is the set of $\mathbb F^W$-stopping times
with values in $[0,T]$ and $\tau_{\mathcal O}^x\equiv \tau_{\mathcal O}^{(0,x)}$ is the exit time from the domain $\mathcal O$ of this agent with initial value $x$.
Then, given the optimal stopping time (solution of the
problem \eqref{optimaltime}) for the  agent with initial condition $x$, $\tau^{m,x}$, and the initial measure $m_0 \in \mathcal P({\mathcal O})$, the second step consists in finding the {\color{black}flow of measures}  $(m_t)_{0\leq t \leq T}$ such that
\begin{align}
m_t(A) = \int m_0(dx)\mathbb P[X^{x}_t\in A; \tau^{m,x}\wedge
  \tau^{x}_{\mathcal O} >t],
\quad A \in \mathcal B({\mathcal O}), \ t\in [0,T].\label{2ndstep}
\end{align}
In other words, the solution of the optimal stopping MFG problem is
the {\color{black}flow of measures} $(m_t)_{0\leq t\leq T}$,  which is the fixed point of the mapping defined
by the right-hand side of \eqref{2ndstep}. In the sequel, such
solution will be called a \emph{pure solution}. As shown in
\cite{bertucci2017optimal}, pure solutions for optimal stopping MFG problems do
not always exist, and for this reason in the sequel we shall consider
relaxed solutions. A relaxed solution is close in spirit to the mixed
solution introduced in \cite{bertucci2017optimal}, precise relationship between
the two notions will be established later in the paper.


\section{Relaxed formulation of the single-agent optimal stopping
  problem}
The relaxed formulation of the optimal stopping problem consists in finding the occupation measure of the
representative agent rather than the stopping time. We first provide a
relaxed formulation of the standard optimal stopping problem in this section and then
move to the relaxed formulation of the MFG problem in the following one. First, we
introduce the necessary notations.

\textcolor{black}{Let $V$ be the space of flows of (signed) bounded measures} on ${\mathcal O}$: $(m_t(\cdot))_{0\leq t \leq
  T} \in V$ is such that: for every $t\in [0,T]$, $m_t$ is a  \textcolor{black}{(signed) bounded
measure} on ${\mathcal O}$, for every $A\in \mathcal B({\mathcal O})$, the
mapping $t\mapsto m_t(A)$
is measurable, and $\int_0^T \int_{{\mathcal O}} m_t(dx)\, dt <\infty$. To each
flow $m\in V$, we associate a \textcolor{black}{signed measure} on $[0,T]\times {\mathcal O}$ defined by $\mu(dt,dx):= m_t(dx)\,
dt$. \textcolor{black}{The space $V$, endowed with the topology of weak
convergence (that is, $\int fd\mu^n \mapsto \int fd\mu$ for every function $f$ continuous and bounded) is a locally convex Hausdorff topological space (see e.g. \cite{V61}).}\\

Consider the optimal stopping problem
\begin{align}
\max_{\tau\in \mathcal T^W([0,T])} \mathbb E\left[\int_0^{\tau \wedge
  \tau^{x}_{\mathcal O}} f(t,X^x_t) dt\right],\label{optistop}
\end{align}
\textcolor{black}
{
In this section we study a relaxed version of this optimal stopping
problem, where the process $X$ starts with an initial distribution
$m^*_0 \in \mathcal P(\mathcal O)$ instead of a fixed value, and which is formulated in terms of
flows of measures rather than stopping times. We let $\bar m_t$ denote
the distribution of the process $X$, started with the initial distribution
$m_0^*$ and killed at the first exit time from $\mathcal O$. In other
words,
$$
\bar m_t(A) = \int_{\mathcal O} m_0^*(dx) \mathbb P[X^x_t\in A;
\tau^x_{\mathcal O}>t]. 
$$
We impose the following minimal assumption on the reward function
$f$. We shall see below in Corollary \ref{bounded.cor} that this
assumption is sufficient for the problem to be well defined, but
stronger assumptions will be imposed for existence of solution. 
{\color{black}\begin{assumption}[\textbf{$f$-min}]
The map $f:[0,T] \times \mathcal{O} \mapsto \mathbb{R}$ is measurable
and satisfies
\begin{align*}
\int_0^T\int_{\mathcal O}  (f(t,x))_- {\bar m}_t (dx)\, dt<\infty,
\end{align*}
where $()_-$ denotes the negative part. 
\end{assumption}}
}

{\color{black} The previous assumption was not
sufficient to guarantee that the integral in (3.2) is well
defined. 
}

\begin{definition}[Relaxed optimal stopping problem]\label{DEF}
For a given initial distribution $m^*_0\in \mathcal P({\mathcal O})$, the
relaxed formulation of the optimal stopping problem
\eqref{optistop} consists in finding the {\color{black}flow of measures}
$(m^*_t)_{0\leq t\leq T}$, which maximizes
the cost functional
\begin{align}
\int_0^T\int_{\mathcal O}  f(t,x) {m}_t (dx)\, dt,\label{relcost}
\end{align}
over $\hat m \in \mathcal A(m^*_0)$, where the set
$\mathcal A(m^*_0)\subseteq V$ contains all flows of positive bounded measures $(\hat
m_t)_{0\leq t\leq T}\in V$  satisfying
\begin{align}\label{constraint}
\int_{\mathcal O} u(0,x)m^*_0(dx)  + \int_0^T \int_{{\mathcal O}} \left\{\frac{\partial u}{\partial t} +
  \mathcal L u\right\} \hat m_t(dx) \, dt \geq 0,
\end{align}
for all $u \in C^{1,2}([0,T]\times {\mathcal O})$ such that $u\geq 0$ and $\frac{\partial
  u}{\partial t} + \mathcal{L}u$ is bounded on $[0,T]\times \mathcal O$.
\end{definition}

The rest of this section is devoted to the solution of the relaxed
optimal stopping problem. A precise connection with the strong (classical)
formulation of the optimal stopping problem will be established in
Section \ref{relation.sec}. To gain some intuition about this
definition right away, remark that for a stopping time $\tau\in
\mathcal T^W([0,T])$, we can introduce the occupation measure $m^x_t(A):= \mathbb E[\mathbf 1_{A}(X^x_t) \mathbf
1_{t\leq \tau\wedge \tau^{x}_{\mathcal O}}]$. Then the objective function of the optimal stopping problem writes
$$
\mathbb E\left[\int_0^{\tau\wedge \tau^{x}_{\mathcal O}} f(t,X^x_t) dt\right] = \int_{[0,T]\times {\mathcal O}}  f(t,y) m^x_t(dy)\, dt.
$$
On the other hand, by It\^o's formula, for a positive and regular test function $u$, one has
\textcolor{black}{
$$
u(0,x) + \int_{[0,T]\times {\mathcal O}} \left(\frac{\partial u}{\partial t} +\mathcal{L}u\right)(t,y)m_t(dy) \, dt = \mathbb E[u(\tau\wedge \tau^{x}_{\mathcal O}\wedge T, X_{\tau\wedge \tau^{x}_{\mathcal O}\wedge
T})] \geq 0.
$$}

In Lemmas \ref{apriori.lm} and \ref{compact.lm}, we study the
properties of the set $\mathcal
A(m^*_0)$. First note that this set is clearly nonempty since it contains the flow
$m_t(dx)\equiv 0$. To proceed, we need a regularity assumption on the
coefficients $\mu$ and $\sigma$. We distinguish two cases depending on
the type of boundary of $\mathcal O$.

\begin{assumption}[\textbf{X-PDE}]
The coefficients $\mu$ and $\sigma$ are such that for every $C^\infty$
bounded function $g:[0,T]\times {\mathcal O}\to \mathbb R$ {\color{black}with
bounded derivatives of all orders,} the equation
\begin{align}\label{PDE}
\left(\frac{\partial u}{\partial t} + \mathcal Lu\right)(t,x) = g(t,x)\quad (t,x)\in
[0,T]\times {\mathcal O},\quad u(T,x) = 0,\quad \forall x\in {\mathcal O},
\end{align}
has a $C^{1,2}$ solution $u$ on $[0,T]\times {\mathcal O}$ such that
$\frac{\partial u}{\partial x}$ has a polynomial growth in $x$,
uniformly in $t$, and such that one of the following two conditions
holds:
\begin{itemize}
\item[i.] The boundary of $\mathcal O$ is unattainable: for all $x\in
  \mathcal O$, $\tau^{x}_{\mathcal O} > T$ a.s.
\item[ii.] The solution $u$ belongs to $C([0,T]\times
  \overline{\mathcal O})$ and satisfies $u(t,x) = 0$ for $(t,x)\in
  [0,T]\times \partial \mathcal O$.
\end{itemize}
\end{assumption}

\begin{remark}\label{pde.rem}
Assumption (X-PDE) holds in a variety of different
settings. Below, while not aiming to give the sharpest possible
conditions, we present some examples of such settings. 
\begin{itemize}
\item 
  Let $\mathcal{O}=\mathbb{R}^d$ and assume that the operator $\mathcal L$ is uniformly
  parabolic: there exists $\gamma>0$ such that for all
  $(t,x) \in [0,T] \times \mathbb{R}^d$ and $\xi \in \mathbb{R}^d$,
  the $d\times d$ matrix $a=\sigma ^\top \sigma$ satisfies
\begin{align}\sum_{i,j=1}^d a_{i,j}(t,x)\xi^{i} \xi^{j} \geq \gamma
  |\xi|^2.\label{parabol}
  \end{align}
Furthermore, suppose that the coefficients $a_{ij}$ are bounded, uniformly H\"older continuous
in $x$ and uniformly continuous in $t$, and the coefficients $\mu_i$
are H\"older continuous in $x$ uniformly on compacts and continuous in
$t$. Then, by Theorem 4.4.6 in \cite{F75}, equation
\eqref{PDE} admits a $C^{1,2}$ solution, and  the polynomial growth of $\frac{\partial u}{\partial x}$ follows
from the estimate (4.4.12) in the above reference. 
\item  Let $\mathcal O$ be a bounded domain
  with $C^1$ boundary and assume that \eqref{parabol} is satisfied and the
  coefficients $a_{ij}$ and $\mu_i$ are uniformly H\"older continuous
  in $(t,x)$ on $[0,T]\times\mathcal O$. Then, by Theorem 4.3.6 in
  \cite{F75}  equation
\eqref{PDE} admits a $C^{1,2}$ solution. 
\item As our last example we consider a situation where the condition
  \eqref{parabol} need not be satisfied. For simplicity, we restrict
  ourselves to the setting of homogeneous equations, that is, the
  coefficients $\mu$ and $\sigma$ do not depend on $t$, but the
  argument may be extended to the general case.  Suppose that the
  boundary of $\mathcal O$ is unattainable and that 
  $\partial_{x_i} \mu$, $\partial^2_{x_i x_j}\mu$, $\partial_{x_i} \sigma$ and
  $\partial^2_{x_i x_j}\sigma$ are bounded and locally Lipschitz. 
This ensures that equation \eqref{sde}
  admits a unique strong solution,
  $$
  X^{x}_t  = \int_0^t \mu(X^x_s) ds + \int_0^t \sigma(X^x_s) dW_s,
  $$
  and, applying Theorem V.39 in \cite{protter} twice (first to the
  process $X$ and then to its first order tangent flow), we conclude
  that the mapping $x\mapsto X^{x}_t$ is twice continuously
  differentiable, and the derivatives $D^i_{kt}:=
  \partial_{x_i} X^{x}_{kt}$ and  $D^{ij}_{kt}:=
  \partial^2_{x_i x_j} X^{x}_{kt}$ are given by
  the solutions of the following system of equations (where we use the
  Einstein convention of summing over repeated indices and
  $\delta^i_k$ denotes the Kroneker symbol).
  \begin{align*}
    D^i_{kt} &= \delta^i_k  + \int_0^t \partial_{x_l}\mu^k(X^x_s) D^i_{ls}
               ds + \int_0^t \partial_{x_l} \sigma^{km}(X^x_s) D^i_{ls} dW^m_s,\\
     D^{ij}_{kt} &= \int_0^t \partial^2_{x_l x_n} \mu^k(X^x_s) D^i_{ls}D^j_{ls}
    ds+ \int_0^t \partial_{x_l} \mu^k(X^x_s) D^{ij}_{ls}
    ds \\ &+ \int_0^t \partial^2_{x_l x_n} \sigma^{km}(X^x_s) D^i_{ls}D^j_{ls} dW^m_s+ \int_0^t \partial_{x_l} \sigma^{km}(X^x_s) D^{ij}_{ls} dW^m_s.
  \end{align*}
  Moreover, by standard arguments (e.g., Theorem V.66 in
  \cite{protter} and Gronwall's lemma), from boundedness of
  derivatives of $\mu$ and $\sigma$ it follows that for some constant
  $K$,
  $$
  \max_{i,k}\mathbb E\left[\sup_{0\leq t\leq T} |D^i_{kt}|^p\right]+\max_{i,j,k}\mathbb E\left[\sup_{0\leq t\leq T} |D^{ij}_{kt}|^p\right]\leq K. 
  $$
Let us define
  $$
  u(t,x) = -\mathbb E\left[\int_t^T g(s,X^{(t,x)}_s) dt\right] =
  -\mathbb E\left[\int_0^{T-t} g(t+s,X^{(0,x)}_s) ds\right].  
  $$
  Then, by dominated convergence, the derivatives $\partial_t u$,
  $\partial x_i u$ and
  $\partial^2_{x_i x_j} u$ exist, are bounded, continuous,
  and given by the following expressions.  
  \begin{align*}
  \partial_t u(t,x) &= -\mathbb E\left[
    g(T,X^{(0,x)}_{T-t}) +\int_0^{T-t} 
                                       \partial_t g(t+s,X^{(0,x)}_s)
                                       ds\right] \\
   \partial_{x_i} u(t,x) &= -\mathbb E\left[
    \int_0^{T-t} \partial_{x_k} g(t+s,X^{(0,x)}_s)
                                         D^i_{ks}
                                         ds\right] \\
\partial^2_{x_i x_j} u(t,x) &= -\mathbb E\left[
    \int_0^{T-t} \left\{\partial^2_{x_k x_l}
                                       g(t+s,X^{(0,x)}_s)
                                         D^i_{ks} D^j_{ls}
                                              + \partial_{x_k} g(t+s,X^{(0,x)}_s)
                                         D^{ij}_{ks}\right\}
                                       ds\right] 
  \end{align*}
  Furthermore, by the Markov property, for $h\in (t,T)$, 
  $$
  u(t,x) = \mathbb E\left[-\int_t^h g(s,X^{(t,x)}_s) dt + u(h, X^{(t,x)}_h)\right],
  $$
  and an application of the It\^o formula yields:
  $$
  \mathbb E\int_t^h \left\{-g(s,X_s) +
    \partial_t(s,X_s)+\partial_{x_i} u(s,X_s) \mu^i(s,X_s) +
    \frac{1}{2}\partial^2_{x_i x_j}u(s,X_s)
    [\sigma^\top \sigma]_{ij}(s,X_s) \right\} dt=0,
  $$
  where we removed the superscript $(t,x)$ to save space. Dividing
  both sides by $h-t$ and passing to the limit $h\to t$, we get
  \eqref{PDE}. 
\end{itemize}
\end{remark}

\begin{lemma}\label{apriori.lm} Let Assumptions (X-SDE) and
  (X-PDE) be satisfied. Fix $m^*_0 \in \mathcal P({\mathcal O})$.
\begin{itemize}
\item[i.]\textcolor{black}{Let $g:\mathcal O \mapsto \mathbb{R}^+$} be a continuous  function with
polynomial growth. Then
  almost everywhere on $t\in[0,T]$, and for $m \in \mathcal A(m^*_0)$,
$$
\int_{\mathcal O} g(x)  m_t(dx) \leq \int_{{\mathcal O}} m^*_0(dx)
\mathbb E[g(X^{x}_{t})\mathbf 1_{t< \tau^{x}_{\mathcal O}}].
$$
\item[ii.] \textcolor{black}{Let $g\in C^2(\mathcal O; \mathbb{R})$ such that $g$, $|\!|\nabla_{_X} g|\!|$ and
  $|\!|H_{_X} g|\!|$ are bounded.} Then, for $m \in \mathcal A(m^*_0)$ and for
  every $\psi \in C^1([0,T])$,
$$
{\int_0^T \psi'(t)\left(\int_{\mathcal O} g(x) m_t(dx)\right)dt \leq C
\|\psi\|_\infty},
$$
for some $C>0$.
\end{itemize}
\end{lemma}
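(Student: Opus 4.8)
The plan is to derive both estimates by inserting cleverly chosen test functions $u$ into the defining inequality \eqref{constraint} of $\mathcal A(m^*_0)$; the two parts differ only in the choice of $u$, and part (ii) is the easier one because it requires no PDE solving. For part (ii), set $M:=\|g\|_\infty$ and test \eqref{constraint} against $u(t,x)=(\|\psi\|_\infty-\psi(t))(g(x)+M)$. This $u$ is nonnegative (both factors are), lies in $C^{1,2}([0,T]\times\mathcal O)$, and satisfies $\partial_t u+\mathcal L u=-\psi'(t)(g(x)+M)+(\|\psi\|_\infty-\psi(t))\mathcal L g(x)$, which is bounded since $\mu,\sigma,g,g',g''$ are bounded. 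Plugging in and rearranging gives $\int_0^T\psi'(t)\big(\int_{\mathcal O}(g+M)m_t\big)\,dt\le C'\|\psi\|_\infty$, where the constant uses $m^*_0(\mathcal O)=1$, the boundedness of $g$ and $\mathcal L g$, and the finiteness $\int_0^T m_t(\mathcal O)\,dt<\infty$ built into $V$. It then remains to absorb the surplus mass term $M\int_0^T\psi'(t)\,m_t(\mathcal O)\,dt$; I would control it by running the same computation with the $x$-independent choice $u(t,x)=\|\psi\|_\infty-\psi(t)\ge0$, for which $\partial_t u+\mathcal L u=-\psi'$ and hence $\big|\int_0^T\psi'(t)\,m_t(\mathcal O)\,dt\big|\le 2\|\psi\|_\infty$. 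Combining the two estimates yields the claim.

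For part (i) I would construct a test function that concentrates, in time, the quantity $\partial_t u+\mathcal L u$ near a fixed instant. Assume first that $g$ is smooth and bounded, and fix a smooth nonnegative weight $\theta$ on $[0,T]$ with $\int_0^T\theta=1$. Since $(t,x)\mapsto\theta(t)g(x)$ is smooth and bounded, Assumption \ref{pde.ass} furnishes a $C^{1,2}$ solution $u$ of $\partial_t u+\mathcal L u=-\theta(t)g(x)$ with $u(T,\cdot)=0$; by the Feynman--Kac representation (with Dirichlet condition on $\partial\mathcal O$ in case ii, and no attainable boundary in case i), $u(t,x)=\mathbb E_{t,x}\big[\int_t^T\theta(s)g(X_s)\mathbf 1_{s<\tau_{\mathcal O}}\,ds\big]\ge0$, so $u$ is admissible in \eqref{constraint} and $u(0,x)=\int_0^T\theta(s)\,\mathbb E[g(X^x_s)\mathbf 1_{s<\tau^x_{\mathcal O}}]\,ds$. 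Feeding this into \eqref{constraint} and rearranging gives $\int_0^T\theta(t)\big(\int_{\mathcal O}g\,m_t\big)\,dt\le\int_0^T\theta(s)\big(\int_{\mathcal O}m^*_0(dx)\,\mathbb E[g(X^x_s)\mathbf 1_{s<\tau^x_{\mathcal O}}]\big)\,ds$. Both $t\mapsto\int_{\mathcal O}g\,m_t$ and $t\mapsto\int_{\mathcal O}m^*_0(dx)\,\mathbb E[g(X^x_t)\mathbf 1_{t<\tau^x_{\mathcal O}}]$ belong to $L^1(0,T)$, so letting $\theta$ shrink to a Dirac mass at $t_0$ and invoking the Lebesgue differentiation theorem produces the pointwise inequality at almost every $t_0$.

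Finally I would remove the smoothness and boundedness restrictions on $g$: a bounded continuous $g$ is approximated from below by smooth bounded functions, and a general positive continuous $g$ of polynomial growth is treated by truncation $g_N=g\wedge N$ together with monotone convergence, the right-hand side remaining finite thanks to the moment bound $\sup_{0\le t\le T}\mathbb E[(X^x_t)^p]<\infty$ combined with polynomial growth of $g$. I expect the main obstacle to lie entirely in part (i): one must secure enough regularity of $u$ to use it legitimately as a test function and justify the Feynman--Kac representation, including its nonnegativity and the correct boundary behaviour in the two cases of Assumption \ref{pde.ass}, and then control the limit passages in $\theta$ and in the approximation of $g$ carefully. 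Part (ii), by contrast, reduces to two explicit one-line choices of $u$ and only elementary bounds.
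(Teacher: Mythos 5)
Your proposal is correct in substance, and the two parts deserve different verdicts. Part (i) is essentially the paper's own argument: the authors likewise solve, via Assumption \ref{pde.ass}, the terminal-value problem with separated right-hand side $-f(t)g(x)$ (your $-\theta(t)g(x)$), obtain nonnegativity and the value of $u(0,x)$ from the It\^o/Feynman--Kac representation exactly as you describe (polynomial growth of $\partial_x u$ kills the expectation of the stochastic integral, and cases i and ii of Assumption \ref{pde.ass} handle the boundary term), plug $u$ into \eqref{constraint}, and conclude ``since $f$ is arbitrary'' --- which is the same time-localization as your shrinking-$\theta$/Lebesgue-point argument --- before extending to continuous $g$ of polynomial growth by approximation, just as you do. Part (ii) is where you genuinely diverge, to your advantage: the paper builds its test function probabilistically, as $u(t,x)=\mathbb E\bigl[\int_t^{T\wedge\tau^{(t,x)}_{\mathcal O}} g(X^{(t,x)}_s)\psi'(s)\,ds\bigr]+C\|\psi\|_\infty$, proving the bound on the expectation by It\^o's formula plus an integration by parts in time, after first reducing to $\psi\in C^\infty$; the $C^{1,2}$ regularity of this $u$, needed for admissibility in \eqref{constraint}, is only asserted ``by the same argument as in the first part,'' and since $g$ is merely $C^2$ this is not literally covered by Assumption \ref{pde.ass}. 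Your explicit choice $u(t,x)=(\|\psi\|_\infty-\psi(t))(g(x)+M)$ is manifestly $C^{1,2}$ and nonnegative with $\partial_t u+\mathcal L u$ bounded, works directly for $\psi\in C^1$ with no smoothing step, and requires no PDE solving or Feynman--Kac input at all, so it is both more elementary and cleaner on the admissibility point.

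One caution: as written, your constant absorbs $\int_0^T m_t(\mathcal O)\,dt$, which is finite for each $m\in V$ but a priori depends on $m$. The lemma is later used (in Proposition \ref{stoppingexist.lm} and Theorem \ref{exist}) to produce a bounded-variation bound that must be \emph{uniform} over $\mathcal A(m^*_0)$, since BV compactness is invoked along a sequence $(m^n)_{n\ge 1}$; the paper's constant depends only on $g$, its derivatives, the coefficients of the SDE and $T$. The fix is one line inside your own scheme: test \eqref{constraint} with $u(t,x)=T-t$ (or apply your part (i) with $g\equiv 1$) to get $\int_0^T m_t(\mathcal O)\,dt\le T$ for every $m\in\mathcal A(m^*_0)$, after which your constant is uniform and the downstream applications go through.
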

\begin{proof}
Part i. Assume that $f$ and $g$ are $C^\infty$ bounded positive
functions {\color{black}with bounded derivatives of all orders}, and let $u$ be the solution of 
$$
 \left(\frac{\partial u}{\partial t} + \mathcal Lu\right)(t,x)  = - g(x) f(t),
$$
described in Assumption (X-PDE).
By It\^o's formula, for $x\in \mathcal O$,
$$
\textcolor{black}
{-u(t,x) = \int_t^{T\wedge \tau^{(t,x)}_{\mathcal O}} \left\{\frac{\partial u}{\partial t}(s, X^{(t,x)}_s) + \mathcal
  Lu(s, X^{(t,x)}_s)\right\}ds + \int_t^{T\wedge \tau^{(t,x)}_{\mathcal O}} (\nabla_X u)^\top (s, X^{(t,x)}_s) \sigma (s, X^{(t,x)}_s) dW_s.}
$$
Taking the expectation and \textcolor{black}{using the equation satisfied by $u$, the
fact that $|\!|\nabla_X u |\!|$ has polynomial growth and the a priori estimates on the strong solution of the SDE (i.e. $\sup_t \mathbb{E}[|\!|X_t|\!|^p]<\infty$, for all $p \geq 1$), we get}

$$
u(t,x) = \mathbb E\left[\int_t^{T\wedge \tau^{(t,x)}_{\mathcal O}}g(X^{(t,x)}_s) f(s) ds\right],
$$
which means that $u$ is an admissible test function \textcolor{black}{in the sense of Definition \ref{DEF}}. Substituting the
above expression for $u$ \textcolor{black}{into
the constraint \eqref{constraint}}, we have
$$
\int_0^T {f(t)}\int_{{\mathcal O}} \mathbb
E\left[g(X^{x}_{t})\mathbf 1_{t< \tau^{x}_{\mathcal O}}\right] m^*_0(dx) dt\geq \int_0^T f(t)\int_{\mathcal O} g(x)
m_t(dx)dt.
$$
Since $f$ is arbitrary, this implies that
$$
\int_{{\mathcal O}} m^*_0(dx) \mathbb
E\left[g(X^{x}_{t}) \mathbf 1_{t< \tau^{x}_{\mathcal O}}\right]\geq \int_{\mathcal O} g(x)
m_t(dx),
$$
$t$-almost everywhere on $[0,T]$. The result may be
extended to a positive continuous function $g$ with polynomial growth by considering a sequence of functions $g^{l,n,m}(x):=g^l(x) \phi^{n,m}(x)$, where  $g^l:=\rho^l \star g$ converges uniformly on compact sets to $g$ (see Prop. 4.21 in \cite{B2010}), $\phi^{n,m}:=\rho^m \star \psi^n$ converges pointwise to $\psi^n$, where $(\rho^l)_{l \geq 1}$, $(\rho^m)_{m \geq 1}$ are two sequences of mollifiers  and $\psi^n(x):=\textbf{1}_{x \in K^n}$, with $K^n$  a sequence of increasing compact sets approximating the open set $\mathcal{O}$ (exhaustion by compact sets of the set $\mathcal{O}$). Note that all elements of the sequence of functions $(g^{l,n,m})_{l,n,m}$ admit bounded derivatives of all orders (since they are continuous and have compact support). The result follows by applying first Lebesgue's Theorem, when taking the limit with respect to $l$ and $m$ and then the monotone convergence theorem when letting $n \rightarrow \infty$.\\

Part ii. \textcolor{black}{First remark that
$$
\int_{\mathcal O} g(x) m_t(dx),
$$
is bounded on $[0,T]$.} This implies that it is enough to prove
the result for $\psi \in C^{\infty}([0,T])$, because for $\psi \in
C^1([0,T])$, the derivative $\psi'$ may be approximated by smooth
functions in the uniform norm.

\textcolor{black}{By It\^o formula, for $s\leq \tau^{(t,x)}_{\mathcal O}$,
\begin{align*}
g(X^{(t,x)}_s) &= g(x) + \int_t^s (\nabla_X g)^\top(X^{(t,x)}_r)
dX^{(t,x)}_r + \frac{1}{2} \int_t^s  Tr(\sigma(r,X^{(t,x)}_r)^{\top} H_X g(X^{(t,x)}_r)\sigma(r,X^{(t,x)}_r))dr.
\end{align*}}
\textcolor{black}{Taking the expectation and integrating by parts we obtain
\begin{align*}
&\mathbb E\left[\int_t^{T\wedge \tau^{(t,x)}_{\mathcal O}} g(X^{(t,x)}_s) \psi'(s) ds\right]\\
&= \mathbb E\left[\int_t^{T\wedge \tau^{(t,x)}_{\mathcal O}} g(x)
  \psi'(s)ds\right]+ \mathbb E\left[\int_t^{T\wedge
  \tau^{(t,x)}_{\mathcal O}} \psi'(s) \int_t^s  (\nabla_X g)^\top(X^{(t,x)}_r)
  \mu(r,X^{(t,x)}_r) dr ds\right] \\ &\qquad +
  \textcolor{black}{\frac{1}{2}\,\mathbb E\left[\int_t^{T\wedge
  \tau^{(t,x)}_{\mathcal O}} \psi'(s) \int_t^s  Tr(\sigma(r,X^{(t,x)}_r)^{\top} H_X g(X^{(t,x)}_r)\sigma(r,X^{(t,x)}_r))dr ds\right]}\\
&  = g(x)\mathbb E[\psi({T\wedge \tau^{(t,x)}_{\mathcal O}})-\psi(t)] + \mathbb E\left[\int_t^{T\wedge \tau^{(t,x)}_{\mathcal O}}(\psi({T\wedge \tau^{(t,x)}_{\mathcal O}}) -\psi(r))
   (\nabla_X g)^\top(X^{(t,x)}_r) \mu(r,X^{(t,x)}_r) dr\right] \\&\qquad + \frac{1}{2}\textcolor{black}{\mathbb E\left[\int_t^{T\wedge \tau^{(t,x)}_{\mathcal O}}(\psi({T\wedge \tau^{(t,x)}_{\mathcal O}}) -\psi(r))
  Tr(\sigma(r,X^{(t,x)}_r)^{\top} H_X g(X^{(t,x)}_r)\sigma(r,X^{(t,x)}_r)) dr\right]\leq
                                                   C \|\psi\|_\infty},
\end{align*}}
for some constant $C<\infty$, \textcolor{black}{due to the bounds on $g$, $|\!|\nabla_X g|\!|$, $|\!|H_X g|\!|$, $|\!|\mu|\!|$ and $|\!|\sigma|\!|$.}
Then we can define the function
$$
u(t,x) = \mathbb E\left[\int_t^{T\wedge \tau^{(t,x)}_{\mathcal O}} g(X^{(t,x)}_s) \psi'(s) ds\right]  +
C\|\psi\|_\infty,
$$
which is an admissible test function by the same argument as the one
used in the first part.
This proves that
$$
\int_{\mathcal O} u(0,x) m^*_0(dx)\geq \int_0^T\psi'(t) \left(\int_{\mathcal O} g(x)
m_t(dx)\right)dt,
$$
and {since $u(0,x) \leq 2C\|\psi\|_\infty$} for all $x \in \mathcal{O}$, we get the statement
of the lemma.
\end{proof}
\begin{corollary} \label{bounded.cor}Under the assumptions of Lemma
  \ref{apriori.lm}, let $m^*_0 \in \mathcal P({\mathcal O})$, and let
  $\bar m_t(dx)$ be the distribution of the process $X$ started with
  initial distribution $m^*_0$ and killed at the first exit time from
  $\mathcal O$. Then for every $m\in \mathcal
  A(m^*_0)$, $m_t\leq \bar m_t$, \textcolor{black}{$dt$-almost everywhere}  on $[0,T]$. In particular,
  if $\bar m_t$ has a density then $m_t$ does as well.
\end{corollary}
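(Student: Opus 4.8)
The plan is to deduce the measure inequality $m_t\le\bar m_t$ from the \emph{scalar} inequalities furnished by Lemma \ref{apriori.lm}(i), the real content being to upgrade the latter from a ``for each test function, almost every $t$'' statement to an ``almost every $t$, simultaneously for all test functions'' statement. First I would observe that $\bar m_t$ is exactly the measure appearing on the right-hand side of Lemma \ref{apriori.lm}(i): for any positive continuous $g$ with polynomial growth one has $\int_{\mathcal O} g\,d\bar m_t = \int_{\mathcal O} m^*_0(dx)\,\mathbb E[g(X^x_t)\mathbf 1_{t<\tau^x_{\mathcal O}}]$, and taking $g\equiv 1$ gives $\bar m_t(\mathcal O)\le 1$, so $\bar m_t$ is finite; each $m_t$ is finite by definition of $V$. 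Thus Lemma \ref{apriori.lm}(i) reads $\int g\,dm_t\le\int g\,d\bar m_t$, for every such $g$, off a $g$-dependent null set of times. The main obstacle is precisely this quantifier interchange, which I would resolve with a countable determining family.

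Next I would fix a countable family $\mathcal G\subset C_c^+(\mathcal O)$ that is dense in $C_c^+(\mathcal O)$ for the uniform norm; such a family exists because $\mathcal O\subseteq\mathbb R$ is second countable, so $C_0(\mathcal O)$ and hence $C_c(\mathcal O)$ are separable in the uniform norm. Each $g_k\in\mathcal G$ is compactly supported, hence positive, continuous, with polynomial growth, so Lemma \ref{apriori.lm}(i) provides a Lebesgue-null set $N_k\subset[0,T]$ off which $\int g_k\,dm_t\le\int g_k\,d\bar m_t$. Setting $N=\bigcup_k N_k$, the set $N$ is still null, and for every $t\notin N$ the inequality $\int g_k\,dm_t\le\int g_k\,d\bar m_t$ holds for all $k$ simultaneously.

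Finally I would fix $t\notin N$ and extend the inequality from $\mathcal G$ to all of $C_c^+(\mathcal O)$: given $g\in C_c^+(\mathcal O)$ and $g_k\to g$ uniformly, finiteness of $m_t$ and $\bar m_t$ gives $|\int (g-g_k)\,d\nu|\le\|g-g_k\|_\infty\,\nu(\mathcal O)\to 0$ for $\nu\in\{m_t,\bar m_t\}$, whence $\int g\,dm_t\le\int g\,d\bar m_t$. The finite signed measure $\lambda_t:=\bar m_t-m_t$ therefore satisfies $\int g\,d\lambda_t\ge 0$ for all $g\in C_c^+(\mathcal O)$; by the Riesz representation theorem this positivity of the functional forces $\lambda_t\ge 0$, i.e.\ $m_t(A)\le\bar m_t(A)$ for every $A\in\mathcal B(\mathcal O)$, and this holds for all $t\notin N$, which is the claimed $t$-almost-everywhere inequality. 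The density statement is then immediate: if $\bar m_t$ has a Lebesgue density, then $m_t\le\bar m_t$ forces $m_t$ to vanish on every Lebesgue-null set, so $m_t\ll\mathrm{Leb}$ and the Radon--Nikodym theorem produces a density for $m_t$.
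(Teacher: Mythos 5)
Your proof is correct, and it reaches the conclusion by a more explicit route than the paper, whose own argument is two lines: approximate the indicator $\mathbf 1_{[a,b]}$, for $a,b\in\mathcal O$, by continuous bounded functions, apply Lemma \ref{apriori.lm}(i) with dominated convergence to get $m_t([a,b])\leq \bar m_t([a,b])$ for all intervals, and conclude. The paper thus works with the generating class of intervals and leaves implicit precisely the point you single out as ``the real content'': Lemma \ref{apriori.lm}(i) yields a $g$-dependent (here, interval-dependent) null set of times, and to obtain the stated ``$m_t\leq\bar m_t$ $t$-a.e.'' one must pass to a countable subfamily (e.g.\ intervals with rational endpoints) and then extend to all Borel sets by a monotone class or regularity argument for the fixed $t$ outside the common null set -- none of which is spelled out in the paper. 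Your version makes this quantifier interchange rigorous via a countable uniformly dense subset of $C_c^+(\mathcal O)$ (legitimate, since $C_0(\mathcal O)$ is separable for the open set $\mathcal O\subseteq\mathbb R$), extends by uniform approximation using the finiteness of $m_t(\mathcal O)$ and $\bar m_t(\mathcal O)\leq 1$, and finishes with the Riesz--Markov positivity argument, which is valid because finite signed Borel measures on a Polish space are Radon; alternatively you could avoid invoking Riesz by taking $g_n\uparrow\mathbf 1_U$ for open $U$ and using outer regularity, which is closer in spirit to the paper's indicator approximation. The density claim is handled identically in both arguments (setwise domination gives absolute continuity, then Radon--Nikodym). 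In short: the paper buys brevity by testing against interval indicators; you buy rigor on the a.e.-$t$ common null set, which is where the corollary's actual difficulty lies. One cosmetic remark: ``compactly supported, hence positive'' is a slip -- positivity of your $g_k$ holds by your choice $\mathcal G\subset C_c^+(\mathcal O)$, not as a consequence of compact support.
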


\begin{proof}
Approximating the indicator function with a sequence of continuous
bounded functions and using the dominated convergence theorem, the
first part of the above lemma yields for all $a,b\in {\mathcal O}$ with
$a<b$ (where the inequality is interpreted componentwise),
$$
m_t([a,b])\leq \int_{[a,b]} \int_{\mathcal O} m^*_0(dx) \bar p^{x}(t,dz) =
\bar m_t([a,b]),
$$
where $\bar p^{x}(t,dz)$ is the transition distribution of the process
$X$ killed at $\tau^{x}_{\mathcal O}$.
\end{proof}

 In the following lemma we continue
the study of the properties of the set $\mathcal A(m^*_0)$. {\color{black}The
compactness of this set is established under the following
assumption. 
\begin{assumption}[\textbf{$m^*_0$-Compact}]
The initial distribution $m^*_0\in \mathcal P({\mathcal O})$ satisfies
\begin{align}
{\color{black}\int_{\mathcal O} \ln\{1+|\!|x|\!|\}m^*_0(dx) <\infty. }\label{compact.eq}
\end{align}
 \end{assumption}

\begin{lemma}\label{compact.lm} Let Assumptions (X-SDE) and
  ($m^*_0$-Compact) be satisfied.
Then the set $\mathcal A(m^*_0)$ is {sequentially compact}.
\end{lemma}}
\begin{proof}
Let us first show the tightness of the associated set of measures on $[0,T]\times
{\mathcal O}$. For $i=1,\dots,d$, define the function
$$
\textcolor{black}{\phi^i_A(x) = \ln \left\{1+|x_i|^3\left(\frac{3x_i^2}{5A^2} - \frac{3|x_i|}{2A} +
    1\right)\right\}\mathbf 1_{|x_i|\leq A} +
\ln\left\{1+\frac{A^3}{10}\right\}\mathbf 1_{|x_i|>A}},
$$
with $A\ge 0$.
Remark that
$$
\textcolor{black}{\partial_{x_i}\phi^i_A(x) = \frac{3x_i^2(1-|x_i|/A)^2}{1+|x_i|^3\left(\frac{3x_i^2}{5A^2} - \frac{3|x_i|}{2A} +
    1\right)}\mathrm{sgn}\,(x_i)\mathbf 1_{|x_i|\leq A}},
$$
\textcolor{black}{and $\partial_{x_j} \phi^i_A(x)=0$, for all $j\neq i$,
from which it is easy to see that $\phi^i_A$ is twice continuously
differentiable on its entire domain, and that the expressions
$\nabla_{_X}\phi^i_A(x)$, $x^T\nabla_{_X}\phi^i_A(x)$, $H_{_X}\phi^i_A(x)$ and $x^\top H_{_X}\phi^i_A(x)
x$ are bounded on ${\mathcal O}$ by a constant
independent from $A$.  In addition, as $A\to \infty$,
$\phi^i_A(x)$ converges in a monotone fashion to the limiting function
$\phi^{i*}(x) = \ln\{1+|x_i|^3\}$.
Now, consider the
test function $u_A(t,x) = (T-t)\sum_{i=1}^d\phi^i_A(x)$.
It follows that
\begin{multline*}
T\int_{\mathcal O} \phi_A(x) m_0^*(dx)-\int_0^T
\int_{\mathcal O}\phi_A(x) m_t(dx)\,dt \\+ \int_0^T
\int_{\mathcal O} (T-t)\sum_{i=1}^d\left\{ \mu_i(t,x)
  \partial_{x_i}\phi^i_A(x) + \frac{\|\sigma_i(t,x)\|^2}{2}
  \partial^2_{x_i x_i}\phi^i_A(x) \right\}m_t(dx)\,dt\geq 0,
\end{multline*}
for $m \in \mathcal A(m^*_0)$.}
From the boundedness of $\mu$ and $\sigma$ and the above
observations, we deduce that the expression within the brackets in the
last term is bounded uniformly on $A$. The limits of the first two
terms, on the other hand, are computed by monotone
convergence. {\color{black}Letting $\phi^*(x):= \sum_{i=1}^d \phi^{i*}(x)$},  we
conclude that there exists a constant $C<\infty$ such that
$$
\int_0^T
\int_{\mathcal O}\phi^*(x) \mu(dt,dx) \leq C + T\int_{\mathcal O} \phi^*(x) m_0^*(dx),
$$
from which the tightness follows \footnote{\textcolor{black}{For sake of clarity, we precise the tightness criteria. Let $F$ be a topological space equipped with its Borel sigma-field. Let $(\mu_{i})_{i \in I}$ be a {\color{black}flow of measures} on $(F, \mathcal{B}(F))$. If there exists a measurable function $\phi:F\mapsto [0, \infty]$ with compact level sets such that $C:=\sup_{i \in I} \int_F \phi(x)d\mu_i(x)<\infty$, then $(\mu_i)_{i \in I}$ is tight (the proof follows immediately by the measure version of the Markov inequality).}}. Moreover, taking $g=1$ in Lemma
\ref{apriori.lm} we see that $\mathcal A(m^*_0)$ is uniformly bounded.
Therefore, by Prokhorov's theorem (Theorem 8.6.2 in \cite{bogachev2007measure}), from any sequence of {\color{black}flows of measures} $(m^n)_{n\geq 1} \subseteq \mathcal A(m^*_0)$, one can
 extract a subsequence, also denoted by $(m^n)_{n\geq 1} $,
such that the sequence of associated measures on $[0,T]\times {\mathcal O}$,
$(\mu^n)_{n\geq 1}$ converges weakly to a limiting measure
$\mu^*$. By weak convergence, the measure $\mu^*$ also satisfies the
constraints of $\mathcal A(m^*_0)$ i.e., for every test function $u$,
$$
\int_{\mathcal O} u(0,x) m^*_0(dx) + \int_0^T \int_{\mathcal O}
\left\{\frac{\partial u}{\partial t} + \mathcal L
  u\right\}\mu^*(dt,dx) \geq 0.
$$
Taking the test function $u(t,x) = \int_t^T f(s)ds$ with $f$ a
positive continuous function, we have
$$
\int_0^T f(t) dt \int_{\mathcal O} m^*_0(dx) \geq \int_0^T \int _{\mathcal O} f(t)
\mu^*(dt,dx).
$$
We conclude that $\mu^*$
is a bounded measure and the measure $\int_{\mathcal O} \mu^*(dt,dx)$ on
$[0,T]$ is
absolutely continuous with respect to the Lebesgue measure, which
means that we can write $\mu^*(dt,dx) = m^*_t(dx)\, dt$ for some
$m^*\in \mathcal A(m^*_0)$. {\color{black}The positivity of the limiting measure
flow follows from weak convergence and absolute continuity.}
\end{proof}



The following proposition is an existence result for the relaxed
optimal stopping problem. We need the following assumption on $f$.
{\color{black}\begin{assumption}[\textbf{$f$-Exist}]
 One of the following alternative
conditions holds true:
\begin{itemize}
\item[i.] The mapping $(t,x)\mapsto f(t,x)$ is continuous on $[0,T]\times
{\mathcal O}$ and satisfies
\begin{align*}
&\int_0^T \int_{{\mathcal O}} |f(t,x)| \bar m_t(dx)\, dt<\infty,
\end{align*}
where $\bar m_t$ is the distribution at time $t$ of the process $X$
started with initial distribution $m_0^*$.
\item[ii.] The function $f$ is of the form
$$
f(t,x) = \sum_{i=1}^n\bar f_i(t) g_i(x),
$$
where $n\geq 1$ and for each $i$, $g_i\in C^2(\mathcal O; \mathbb{R})$
is such that $g_i$, $|\!|\nabla_{_X} g_i|\!|$ and
  $|\!|H_{_X} g_i|\!|$ are bounded.,  and $\bar f_i$ is bounded measurable.
\end{itemize}
 \end{assumption}}

\begin{proposition}\label{stoppingexist.lm}Let Assumptions
  (X-SDE),  (X-PDE), ($m^*_0$-Compact) and ($f$-Exist) be satisfied.
Then there
exists $m^*\in \mathcal A(m^*_0)$ which maximizes the functional
$$
 m \mapsto \int_0^T\int_{\mathcal O}  f(t,x) {m}_t (dx)\, dt,
$$
 over
all $ m \in \mathcal A(m^*_0)$.
\end{proposition}
\begin{proof}
Choose a maximizing sequence of {\color{black}flows of measures} $(m^n)_{n\geq 1}{\subseteq} \mathcal A(m^*_0)$. By Lemma
\ref{compact.lm}, it has a subsequence, also denoted by $(m^n)_{n\geq 1}$, which
converges weakly to a limit $m^* \in \mathcal A(m^*_0)$.
To show that $m^*$ is a maximizer of \eqref{relcost}, we consider
separately the two alternative assumptions of the proposition.

Suppose that Assumption i.~holds true. Fix $\varepsilon>0$. By the continuity of $f$ and the
integrability assumption, there exists $0\le M<\infty$ such that
$$
\int_{[0,T]\times {\mathcal O}, f(t,x)>M} f(t,x) \bar m_t(dx)\, dt +
\int_{[0,T]\times {\mathcal O}, f(t,x)<-M} |f(t,x)| \bar m_t(dx)\, dt
<\varepsilon.
$$
Then, by weak convergence and by Corollary \ref{bounded.cor},
\begin{align}\label{ineqq}
&\limsup_n \int_0^T \int_{{\mathcal O}} f(t,x) m^n_t (dx)\, dt \leq
\varepsilon + \limsup_n \int_0^T\int_{\mathcal O} (M\wedge
f(t,x))\vee (-M) m^n_t(dx) dt \nonumber \\
& = \varepsilon + \int_0^T\int_{\mathcal O} (M\wedge
f(t,x))\vee (-M) m^*_t(dx) dt \nonumber \\
&\leq 2\varepsilon + \int_0^T \int_{\mathcal O} f(t,x) m^*_t(dx) dt.
\end{align}
Since $\varepsilon$ is arbitrary, $m^n$ is a maximizing sequence and $m^* \in \mathcal A(m^*_0)$,
this finishes the proof.

Suppose now that Assumption ii.~holds true instead. Without loss of
generality it is enough to consider the case where $n=1$, and we omit
the index $i$. 
Consider the mapping $G^n: [0,T]\mapsto \mathbb R$ defined by $G^n(t)
= \int_{{\mathcal O}}
g(x)m^n_t (dx)$.
By Lemma \ref{apriori.lm} and Proposition
3.6 in \cite{ambrosio2000functions}, $G^n$ is then of bounded
variation on $[0,T]$. Then,
by Theorem 3.23 in the above reference, up to taking a subsequence, we
may assume that the sequence of mappings $(G^n)_{n\geq 1}$ converges
in $L^1([0,T])$ to some mapping $G^*$. On the other hand, in view of the
weak convergence, for any continuous function $f:[0,T]\mapsto \mathbb
R$,
$$
\int_0^T f(t) G^n(t) dt \to \int_0^T f(t)  \int_{{\mathcal O}} g(x) m^*_t (dx) dt.
$$
This shows that $G^*(t)
= \int_{{\mathcal O}} g(x) m^*_t (dx)$. We conclude that
$$
\int_0^T \bar f(t) \int_{{\mathcal O}}
g(x) m^n_t (dx)dt\to \int_0^T \bar f(t)\int_{{\mathcal O}}
g(x) m^*_t (dx)dt,
$$
as $n\to \infty$. 
\end{proof}

\section{Relaxed formulation of the optimal stopping MFG problem}
We now give the definition of Nash equilibrium for the relaxed MFG
optimal stopping problem. {\color{black}For the problem to be well-defined, we
impose the following minimal assumption on the reward function $f$:
\color{black}\begin{assumption}[\textbf{$f$-min-MFG}]
For every $m\in \mathcal A(m^*_0)$, the map 
$$
(t,x) \mapsto f(t,x,m_t)
$$
is measurable and satisfies
$$
\int_0^T \int_{\mathcal O} (f(t,x,m_t))_- \bar m_t(dx) <\infty. 
$$
\end{assumption}}
\begin{definition}
Given the initial distribution $m^*_0$, a {\color{black}flow of
  measures} $m^* \in \mathcal{A}(m^*_0)$ is a Nash equilibrium for the
relaxed MFG optimal stopping problem (or ``relaxed Nash equilibrium'')
if
{\color{black}$$
\int_0^T \int_{\mathcal O}  {f}(t,x,m^*_t)m^*_t(dx)\,dt <\infty
$$
and}
\begin{align*}
\int_0^T \int_{\mathcal O} {f}(t,x,m^*_t )m_t(dx)\,dt \leq \int_0^T \int_{\mathcal O}  {f}(t,x,m^*_t)m^*_t(dx)\,dt,
\end{align*}
for all $m \in \mathcal{A}(m^*_0)$.
\end{definition}
In other words, the set of Nash equilibria coincides with the set of
fixed points of the \textcolor{black}{set-valued mapping ${\Theta: \mathcal A(m_0^*) \to
 2^{\mathcal{A}(m_0^*)}, {\rm \,\, with\,\, } 2^{\mathcal{A}(m_0^*)}  {\rm \,\, the\, family\, of }}$ ${ {\rm sets \, over \,\, }\mathcal{A}(m_0^*)}$}, defined by
$$
\Theta (m) =  \argmax_{\hat m \in \mathcal A(m^*_0)}
\int_0^T \int_{{\mathcal O}} f(t,x,m_t) \hat m_t (dx)\,dt,
$$
which is well defined whenever the function $(t,x)\mapsto f(t,x,m_t)$
satisfies the conditions of Proposition \ref{stoppingexist.lm}.

{\color{black}The next theorem estalishes existence of the MFG equilibrium under the
following assumption.
\begin{assumption}[\textbf{$f$-Exist-MFG}]
Let the reward function $f$ be of the form
$$
f(t,x,m) = \sum_{i=1}^K \bar f_i \left(t,\int_{\mathcal O} \bar g_i(x) m_t(dx)\right) g_i(x),
$$
where, for each $i$,  \textcolor{black}{$g_i, \bar{g}_i \in C^2(\mathcal O; \mathbb{R})$ 
are such that $g_i, \bar{g}_i$, $|\!|\nabla_{_X} g_i|\!|$, $|\!|\nabla_{_X} \bar{g}_i|\!|$,
  $|\!|H_{_X} g_i|\!|$,  $|\!|H_{_X} \bar{g}_i|\!|$ are bounded},  and $\bar f_i$ is bounded measurable and continuous with respect to its second argument.
\end{assumption}}
 \begin{theorem}\label{exist} Let Assumptions (X-SDE), (X-PDE),
  ($m^*_0$-Compact) and ($f$-Exist-MFG) be satisfied. Then there exists a Nash equilibrium for the relaxed MFG problem.
\end{theorem}
\begin{proof}
We shall use the Fan-Glicksberg fixed{-}point theorem (\textcolor{black}{Theorem 7.1 in}
\cite{mclennan2018advanced}). We have seen that $V$ is a locally
convex space; moreover, the subset $\mathcal A(m_0^*) \subseteq V$ is
compact (by Lemma \ref{compact.lm} and \textcolor{black}{since $\mathcal{A}(m_0^*)$ is included in the space of
positive and finite measures on a separable metric space, which is metrizable}), \textcolor{black}{convex} and nonempty. The mapping
$\Theta$ is clearly convex. Therefore, to prove that it has a fixed point it suffices
to check that it is upper semicontinuous. In other words, we check that it has a
closed graph (see Proposition 5.1.3 in \cite{mclennan2018advanced}), where the graph is defined by
$$
{\text{Gr}(\Theta) = \{(m,\bar m)\in (\mathcal A(m^*_0))^2: m\in \Theta(\bar m)\}}.
$$
To show that ${\text{Gr} (\Theta)}$ is closed it suffices to check that for any
two sequences $(m^n)_{n\ge 1}\subseteq \mathcal A(m^*_0)$ and $(\bar m^n)_{n\ge 1}\subseteq
\mathcal A(m^*_0)$ which converge weakly to $m\in \mathcal A(m^*_0)$ and
$\bar m\in \mathcal A(m^*_0)$ respectively, and such that
$$
\int_0^T \int_{{\mathcal O}} f(t,x,\bar m^n_t)  m^n_t (dx)\,dt \geq \int_0^T
\int_{{\mathcal O}} f(t,x,\bar m^n_t)  \hat m_t (dx)\,dt,
$$
for every $\hat m\in \mathcal A(m^*_0)$,
we have
$$
\int_0^T \int_{{\mathcal O}} f(t,x,\bar m_t)  m_t (dx)\,dt \geq \int_0^T
\int_{{\mathcal O}} f(t,x,\bar m_t)  \hat m_t (dx)\,dt,
$$
for every $\hat m\in \mathcal A(m^*_0)$.
To prove this, it is enough to show that, up to taking a subsequence,
\begin{align}\label{conv1}
\int_0^T \int_{{\mathcal O}} f(t,x,\bar m_t)  m_t (dx)\,dt = \lim_{n}\int_0^T \int_{{\mathcal O}} f(t,x,\bar m^n_t)  m^n_t (dx)\,dt,
\end{align}
{and
\begin{align}\label{conv2}
\int_0^T \int_{{\mathcal O}} f(t,x,\bar m_t)  \hat{m}_t (dx)\,dt = \lim_{n}\int_0^T\int_{{\mathcal O}} f(t,x,\bar m^n_t)  \hat m_t (dx)\,dt.
\end{align}
We will only show that $\eqref{conv1}$ holds true, since the convergence given by \eqref{conv2} follows by the same arguments.} It is enough to
consider the case $K=1$ and we drop the index $i$. We therefore need
to prove
\begin{align}
\int_0^T  \bar f(t,\bar g * \bar m_t) \, g * m_t\,dt = \lim_n \int_0^T
  \bar f(t,\bar g * \bar m^n_t) \, g* m^n_t \,dt,\label{limprop2}
\end{align}
where we write $g*m$ as a shorthand for $\int_{\mathcal O} g(x) m(dx)$.
As in the proof of
Proposition \ref{stoppingexist.lm}, we may show that $\bar g * \bar m^{n}$ converges to $\bar g*\bar m$ in
$L^1([0,T])$.  Similarly, we may show that $g * m^{n}$ converges to $ g*m$ in
$L^1([0,T])$. Since $f$ is continuous, $\bar f(t,\bar g * \bar m^n_t) \, g*
m^n_t $ converges almost everywhere to $\bar f(t,\bar g * \bar m_t) \, g*
m_t $. Further, by Corollary \ref{bounded.cor}, $g*
m^n_t $ is uniformly bounded, and \eqref{limprop2} follows from the
dominated convergence theorem.
\end{proof}

\paragraph{Uniqueness of the Nash value for the relaxed MFG problem}
We prove here the uniqueness result of the Nash equilibrium value for the relaxed problem, which holds under the following  assumption on the map $f$.
{\color{black}\begin{assumption}[\textbf{$f$-Uniq-MFG}]
The function $f$ takes the following form
\begin{equation*}
f(t,x,m)=g(x)\bar f\left (t,\int_{\mathcal O} g(x)m_t(dx)\right) + h(t,x),
\end{equation*}
where $g\in C^2(\mathcal O; \mathbb{R})$
is such that $g$, $|\!|\nabla_{_X} g|\!|$ and
  $|\!|H_{_X} g|\!|$ are bounded.,  $h:[0,T]\times\mathcal O \mapsto \mathbb{R}$ is continuous, with polynomial growth in $x$ and $\bar f: [0,T]\times \mathbb{R} \mapsto \mathbb{R}$ is bounded measurable, continuous {and decreasing}  in the second
argument.
\end{assumption}}
\begin{remark}\label{antim}
Note that under Assumption ($f$-Uniq-MFG), the function $f$
satisfies for each $t$ and all $m^1 \in \mathcal A(m^*_0)$ and $m^2
\in \mathcal A(m^*_0)$ the following antimonotonicity condition
\begin{align}
&\int_{\mathcal O}
  \left(f(t,x,m^1_t)-f(t,x,m^2_t)\right)\left(m^1_t(dx)-m^2_t(dx)\right) \notag\\
& = (\bar f(t,g*m^1_t) - \bar f(t,g*m^2_t)) (g*m^1_t-g*m^2_t)\leq 0. \notag
\end{align}
\end{remark}

\begin{theorem}[\bf{Uniqueness of the Nash value}]\label{Uniqueness}
Let $m^*$ and $\bar{m}$ be two Nash equilibria for the relaxed
problem and let Assumption ($f$-Uniq-MFG) be satisfied. Then,
$$
\bar f(t,g*m^*_t) = \bar f(t,g*\bar m_t),
$$
almost everywhere on $[0,T]$,
and in particular they lead to the same value of the relaxed fixed point problem, that is $\int_0^T \int_{\mathcal O} f(t,x,m^*)m^*_t(dx)=\int_0^T \int_{\mathcal O} f(t,x,\bar{m})\bar{m}_t(dx)$.
\end{theorem}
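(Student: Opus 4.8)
The plan is to run the classical Lasry--Lions monotonicity argument, adapted to the linear-programming formulation, using the antimonotonicity identity of Remark \ref{antim} as the source of rigidity. Write, for brevity,
$$
I(m,\nu) := \int_0^T\int_{\mathcal O} f(t,x,m_t)\,\nu_t(dx)\,dt,
$$
so that the Nash property of $m^*$ reads $I(m^*,\nu)\le I(m^*,m^*)$ for all $\nu\in\mathcal A(m^*_0)$, and likewise for $\bar m$. First I would test the optimality of each equilibrium against the other: taking $\nu=\bar m$ in the Nash inequality for $m^*$ gives $I(m^*,\bar m)\le I(m^*,m^*)$, and taking $\nu=m^*$ in the Nash inequality for $\bar m$ gives $I(\bar m,m^*)\le I(\bar m,\bar m)$. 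Adding the two yields $I(m^*,\bar m)+I(\bar m,m^*)\le I(m^*,m^*)+I(\bar m,\bar m)$.

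On the other hand, integrating the pointwise antimonotonicity identity of Remark \ref{antim} over $[0,T]$ gives
$$
\int_0^T\!\!\int_{\mathcal O}\bigl(f(t,x,m^*_t)-f(t,x,\bar m_t)\bigr)\bigl(m^*_t(dx)-\bar m_t(dx)\bigr)\,dt = I(m^*,m^*)-I(m^*,\bar m)-I(\bar m,m^*)+I(\bar m,\bar m)\le 0,
$$
the sign coming from the fact that $\bar f$ is decreasing in its second argument. Combining this with the inequality from the first step forces $I(m^*,m^*)+I(\bar m,\bar m)=I(m^*,\bar m)+I(\bar m,m^*)$; since moreover $I(m^*,m^*)-I(m^*,\bar m)\ge 0$ and $I(\bar m,\bar m)-I(\bar m,m^*)\ge 0$ are two nonnegative quantities whose sum vanishes, each must be zero, and in particular the integral of the antimonotone integrand displayed above is exactly $0$. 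As its integrand is nonpositive for a.e.\ $t$, it vanishes $t$-almost everywhere, i.e.
$$
\bigl(\bar f(t,g*m^*_t)-\bar f(t,g*\bar m_t)\bigr)\bigl(g*m^*_t-g*\bar m_t\bigr)=0,\qquad t\text{-a.e.}
$$
A short dichotomy then gives the first claim: where $g*m^*_t=g*\bar m_t$ the two arguments of $\bar f$ coincide so the values agree trivially, and elsewhere the first factor must vanish; in both cases $\bar f(t,g*m^*_t)=\bar f(t,g*\bar m_t)$ a.e.

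Finally, because $f(t,x,m)=g(x)\bar f(t,g*m_t)+h(t,x)$, the equality of the $\bar f$-values just obtained means $f(t,x,m^*_t)=f(t,x,\bar m_t)$ for all $x$ and a.e.\ $t$; hence $I(\bar m,m^*)=I(m^*,m^*)$, which combined with $I(\bar m,m^*)=I(\bar m,\bar m)$ (the second vanishing difference above) yields $I(m^*,m^*)=I(\bar m,\bar m)$, that is, the equality of values. The only point requiring genuine care is the finiteness of all four functionals $I(\cdot,\cdot)$, which legitimises the rearrangements and the ``nonpositive with zero integral'' step: this follows from $g$, $g'$, $g''$ and $\bar f$ being bounded, from $h$ having polynomial growth in $x$, and from Corollary \ref{bounded.cor} bounding each competing family by $\bar m_t$, whose moments are finite under Assumption \ref{sde.ass}. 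Beyond this bookkeeping the argument is essentially algebraic, so I do not anticipate a substantial obstacle.
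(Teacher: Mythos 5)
Your proposal is correct and follows essentially the same route as the paper's proof: cross-testing each Nash inequality against the other equilibrium, summing, and playing the result against the antimonotonicity identity of Remark \ref{antim} to force the a.e.\ vanishing of $\bigl(\bar f(t,g*m^*_t)-\bar f(t,g*\bar m_t)\bigr)\bigl(g*m^*_t-g*\bar m_t\bigr)$. Your write-up is in fact slightly more explicit than the paper's at the final step (the paper only says ``integrating over $[0,T]\times\mathcal O$''), where you correctly note that equality of the $\bar f$-values gives $f(t,x,m^*_t)=f(t,x,\bar m_t)$ and then chain the two vanishing differences to equate the values.
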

\begin{proof}
Since $m^*$ is a Nash equilibrium, we get that
\begin{align*}
\int_0^T \int_{\mathcal O} f(t,x,m^*)\bar{m}_t(dx)\leq \int_0^T \int_{\mathcal O} f(t,x,m^*)m_t^*(dx)dt.
\end{align*}
Since $\bar{m}$ is also a Nash equilibrium, we obtain
\begin{align*}
\int_0^T \int_{\mathcal O} f(t,x,\bar{m})m_t^*(dx)dt\leq \int_0^T \int_{\mathcal O} f(t,x,\bar{m})\bar{m}_t(dx)dt.
\end{align*}
From the two above inequalities, we derive that
\begin{align*}
\int_0^T \int_{\mathcal O} (f(t,x,\bar{m})-f(t,x,m^*))(\bar{m}_t(dx)-m_t^*(dx))dt \geq 0.
\end{align*}
The antimonotonicity property of the map ${f}$ then implies that
\begin{align*}
\int_{\mathcal O} (f(t,x,\bar{m})-f(t,x,m^*))(\bar{m}_t(dx)-m^*_t(dx)) = 0,
\end{align*}
almost everywhere on $[0,T]$, or in other words that
$$
(\bar f(t,g*m^1_t) - \bar f(t,g*m^2_t)) (g*m^1_t-g*m^2_t)=0,
$$
almost everywhere on $[0,T]$, which implies that
$$
\bar f(t,g*m^1_t) = \bar f(t,g*m^2_t),
$$
almost everywhere on $[0,T]$. Integrating over $[0,T]\times {\mathcal O}$ we
see that the two equilibria lead to the same value.
\end{proof}
\textcolor{black}{
\begin{remark}
A natural question is to see if one can use a relaxed Nash equilibrium corresponding to the MFG game problem in order to construct a $\varepsilon$-Nash equilibria for the $N$-player game. A possible way to do it, is to show that, given $m^*$ a relaxed MFG equilibrium, then the empirical measures 
\begin{align}\label{empiricalmeas2}
m^N_t(dx) = \frac{1}{N} \sum_{i=1}^N \delta_{X^i_t}(dx) \mathbf 1_{t
\leq \textcolor{black}{\tau^{i}} \wedge \tau_{\mathcal O}^i},
\end{align}
correspond to a $\varepsilon$-Nash equilibria, 
where, for every $i \in \{1,2,...,N \}$, $\tau_{i}$ maximizes
$$
\max_{\tau }\mathbb E\left[\int_0^{\tau\wedge \tau_{\mathcal O}^i} e^{-\rho t} \tilde f(t,X^i_t, m_t^*)
dt \right].
$$
This problem is left for further research.
\end{remark}}

\section{Relation between the relaxed and the strong formulation of the single-agent optimal stopping and of the MFG problem \textcolor{black}{and relation with mixed solutions}}\label{relation.sec}
In this section we provide the relation between the relaxed and the strong formulation of the single-agent optimal stopping problem and of the MFG problem, \textcolor{black}{as well as with the mixed solutions introduced in \cite{bertucci2017optimal}}. We make here the following additional assumption.
\begin{assumption}[\textbf{X-Reg}]${}$
\begin{enumerate}[i.]
\item \textcolor{black}{The domain ${\mathcal O}$ is an open bounded domain  of
$\mathbb{R}^d$, with boundary $\Gamma:=\partial \mathcal{O}$ of class
$C^2$ and  the process $X_\cdot$ {\color{black}started with initial distribution
$m^*_0$ and killed at the first exit time of $\mathcal O$} has a distribution
$\bar m_t$, which, for each $t$, has a square integrable density with respect to the Lebesgue measure.
\item  $\sigma$ satisfies the uniform ellipticity condition.}
\end{enumerate}
\end{assumption}
\begin{remark}{\color{black}
  Let $\mathcal O$ be as in Assumption (X-Reg), assume that
  $\sigma$ satisfies the uniform ellipticity condition, that the
  coefficients $a =
  \sigma^\top \sigma$ and $\mu$ are uniformly Lipschitz continuous on
  $[0,T]\times \mathcal O$ and that the initial distribution $m^*_0$
  admits a bounded density with respect to the Lebesgue measure. Then, by Theorem 3.16 in
  \cite{friedman83}, the operator $\mathcal L$ admits a
  Green function $G(x,t; \xi, T)$, which is continuous in $\xi$ for
  all $T>t$. Moreover, the Green function admits an Aronson-type
  estimate of the form
  \begin{align}
  G(x,t; \xi, T) \leq c(T-t)^{-d/2} \exp\left(-C \frac{\|x-\xi\|^2}{T-t}\right),\label{aronson}
  \end{align}
  see Equation (16.16) in \cite{ladyzh}. 
  This means that the solution $u$ to the equation
  $$
  \frac{\partial u}{\partial t} + \mathcal Lu =0
  $$
  with boundary condition $u|_{\partial \mathcal O} = 0$ and terminal
  condition $u(T,\xi) = \phi(\xi)$ is given by
  $$
  u(t,x) = \int_{\mathcal O} G(x,t; \xi, T) \phi(\xi)d\xi. 
  $$
  On the other hand, by Theorem 5.2 in \cite{F75}, this solution
  is given by
  $$
  u(t,x) = \mathbb E[\phi(X^{(t,x)}_T)\mathbf 1_{\tau^x_{\mathcal
      O}>T}]. 
  $$
  We conclude that the Green function coincides with the density of
  the process started at $(t,x)$ and killed at the first exist time
  from $\mathcal O$. The density of the process started with the
  initial distribution $m_0^*$ is therefore given by
  $$
  \bar m_t(\xi) = \int_{\mathcal O} m^*_0(x) G(x,0; \xi, T)dx.
  $$
  Since $m^*_0$ is bounded by assumption, we conclude using the bound
  \eqref{aronson} that the density \textcolor{black}{$\bar{m}_t(\xi)$} is uniformly bounded on
  $[0,T]$. 
   Note that the process satisfying the conditions
  given in this remark also satisfies the assumptions (X-SDE)
  and (X-PDE) (see Remark \ref{pde.rem}). 
}
  \end{remark}
Note that, by Corollary \ref{bounded.cor}, we derive that $m_t$ admits a  square integrable density with respect to the Lebesgue measure, for each $m \in \mathcal{A}(m_0^*)$ and for a.e. $t \in (0,T]$.

Let \textcolor{black}{$W$ be a standard $K$-dimensional Brownian motion} and $X_0$ be a random variable
with distribution $m^*_0$, independent from $W$. {We suppose that
  $X_0$ is valued in $\mathcal{O}$ and that $m^*_0$ admits a square
  integrable density with respect to the Lebesgue measure.} In the sequel, we
denote by $\mathbb{F}$ the filtration given by $\mathcal F_t =
\sigma(W_s, 0\leq s\leq t) \vee\sigma(X_0)\vee \mathcal{N}$, where $\mathcal{N}$ denotes
the sets of zero measure.  Moreover, $\mathcal{T}([t,T])$ denotes the set of
stopping times with respect to this filtration with values in $[t,T]$. \textcolor{black}{We also denote by $\mathcal{T}^t_W([t,T])$ the set of stopping times with respect to the \textcolor{black}{(completed)} filtration generated by the translated Brownian motion $W^t_s:=W_s-W_t$, $s\geq t$,  with values in $[t,T]$.} 

\textcolor{black}{We address first the case of the single-agent optimal stopping problem.}

\begin{theorem}\textbf{[Single-Agent optimal stopping problem]}\label{TH}
\textcolor{black}{Let Assumptions (X-SDE), (X-PDE), (X-Reg) and
  \textcolor{black}{($f$-Exist)(ii)} be satisfied.} Let $v$ be the value function of the following optimal stopping problem
\begin{align}\label{valuefct}
v(t,x)=\sup_{\textcolor{black}{\tau \in {\mathcal{T}^{t}_W([t,T])}}} \mathbb{E}\left[\int_t^{\tau \wedge \tau_{\mathcal O}^{(t,x)}} f(s,X_s^{(t,x)})ds\right],
\end{align}
with $(t,x) \in [0,T] \times \mathbb{R}$ and $\tau_{{\mathcal O}}^{(t,x)}:=\inf \{ s \geq t: \,\,\, X_s^{(t,x)} \notin {\mathcal O} \}$.
We have
\begin{enumerate}[i.]
\item $\int_{\mathcal O} v(0,x)m^*_0(dx)=\underset{m \in \mathcal{A}(m^*_0)}{\sup}   \int_0^T \int_{\mathcal O} f(s,x)m_s(dx)ds.$
\item Let $x \in \mathcal{O}$ and define $\bar{\tau}^{x}:=\inf \{0 \leq s \leq T:\,\, v(s,X_s^{x})=0\}$. Then the measure $m^*$ given by  $m_t^*(A):=\int_{\mathcal O} m^*_0(dx) \mathbb{P}[X_t^{x} \in A,t <\bar{\tau}^{x}]$ for all $A \in \mathcal{B}({\mathcal O})$ is a maximizer of the map $m \in \mathcal{A}(m_0^*) \mapsto  \int_0^T \int_{\mathcal O} f(s,x)m_s(dx)ds.$
\item Let $\bar{m}$ be a maximizer of the map $m \in \mathcal{A}(m_0^*) \mapsto \int_0^T \int_{\mathcal O} f(s,x)m_s(dx)ds.$ Then it satisfies:
\begin{enumerate}[a.]
\item $\int_{\mathcal{S}}f(t,x)\bar{m}_t(dx)dt=0$, with $\mathcal{S}:=\{(t,x) \in [0,T] \times {\mathcal O}: \,\, v(t,x)=0\}.$
\item For all ${C_c^\infty}$ functions $\phi$ such that $\supp \phi \subseteq \{(t,x) \in [0,T] \times {\mathcal O}:\,\, v>0 \}$, the following holds
\begin{align}
\int_0^T\int_{{\mathcal O}}
\left\{\frac{\partial \phi}{\partial t} + \mathcal L
  \phi\right\}\bar{m}_t(dx)dt+\int_{\mathcal O} \phi(0,x) m^*_0(dx)=0.
\end{align}
\end{enumerate}
\end{enumerate}
\end{theorem}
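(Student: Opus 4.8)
The plan is to base the entire argument on the variational characterisation of the value function $v$. Under Assumption \ref{Assump} (bounded domain, uniform ellipticity, square-integrable densities), parabolic obstacle-problem theory provides that $v$ lies in a suitable Sobolev class, is nonnegative, satisfies $v(T,\cdot)=0$ and $v=0$ on $\partial\mathcal O$, and obeys almost everywhere the relations $\partial_t v+\mathcal L v+f\le 0$ and $v\,(\partial_t v+\mathcal L v+f)=0$; moreover $\bar\tau^{x}$, the first entry time into $\{v=0\}$, is an optimal stopping time in \eqref{valuefct}. I would record these facts first, as they drive every subsequent step.

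For part i., the inequality $\ge$ comes from using $v$ as a test function in the constraint defining $\mathcal A(m^*_0)$: as $v\ge 0$ and $\partial_t v+\mathcal L v\le -f$, any $m\in\mathcal A(m^*_0)$ satisfies $\int_{\mathcal O}v(0,x)m^*_0(dx)\ge-\int_0^T\!\int_{\mathcal O}(\partial_t v+\mathcal L v)\,m_t(dx)\,dt\ge\int_0^T\!\int_{\mathcal O}f\,m_t(dx)\,dt$. The reverse inequality and part ii.\ are proved together: I would take $m^*$ to be the occupation measure of $X^{x}$ killed at $\bar\tau^{x}$, check $m^*\in\mathcal A(m^*_0)$ by applying It\^o's formula to an arbitrary nonnegative test function and integrating against $m^*_0$, and compute $\int_0^T\!\int_{\mathcal O}f\,m^*_t(dx)\,dt=\int_{\mathcal O}\mathbb E[\int_0^{\bar\tau^{x}\wedge\tau^{x}_{\mathcal O}}f(s,X^{x}_s)\,ds]\,m^*_0(dx)=\int_{\mathcal O}v(0,x)m^*_0(dx)$ by optimality of $\bar\tau^{x}$, matching the upper bound.

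For part iii., I would first note that any maximiser $\bar m$ makes the constraint for $u=v$ tight: combining $\int_0^T\!\int_{\mathcal O}f\,\bar m_t(dx)\,dt=\int_{\mathcal O}v(0,x)m^*_0(dx)$ (part i.) with the constraint and with $\partial_t v+\mathcal L v+f\le 0$ forces $\int_0^T\!\int_{\mathcal O}(\partial_t v+\mathcal L v+f)\,\bar m_t(dx)\,dt=0$; since the integrand is $\le 0$ and $\bar m\ge 0$, it vanishes $\bar m$-almost everywhere. For (a), on $\mathcal S=\{v=0\}$ the identity $v\equiv 0$ forces $\partial_t v=\partial_x v=\partial_{xx}v=0$ a.e.\ (the derivatives of a Sobolev function vanish a.e.\ on its zero set), so there the integrand equals $f$; since $\bar m_t$ has a density by Corollary \ref{bounded.cor} and Assumption \ref{Assump}, $\bar m$-a.e.\ and Lebesgue-a.e.\ coincide on $\mathcal S$, giving $\int_{\mathcal S}f\,\bar m_t(dx)\,dt=0$. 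For (b), fix $\phi\in C_c^\infty$ with $\supp\phi\subseteq\{v>0\}$; by continuity $v\ge\delta>0$ on the compact $\supp\phi$, so $v\pm\varepsilon\phi\ge0$ for small $\varepsilon>0$ and remain admissible test functions. Applying the constraint to $v+\varepsilon\phi$ and to $v-\varepsilon\phi$, subtracting the tight identity for $v$ and dividing by $\varepsilon$ yields the two opposite inequalities, hence the claimed equality.

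The main obstacle will be the regularity step that turns $v$ into a genuine admissible test function. The definition of $\mathcal A(m^*_0)$ demands $C^{1,2}$ functions with bounded $\partial_t u+\mathcal L u$, whereas an optimal-stopping value is typically only $C^1$ with $W^{2,p}$ second derivatives; one must therefore extend the admissible class to a suitable Sobolev space by approximation (legitimate here thanks to the square-integrable densities from Assumption \ref{Assump} and Corollary \ref{bounded.cor}) so that $v$ and $v\pm\varepsilon\phi$ qualify, and must justify the a.e.\ vanishing of the derivatives of $v$ on $\{v=0\}$ used in (a). These points are exactly where the ellipticity and boundedness hypotheses enter.
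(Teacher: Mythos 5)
Your proposal is correct and follows essentially the same route as the paper's proof: the $W^{2,1,2}$ variational-inequality characterisation of $v$ with the optimal hitting time $\bar\tau^{x}$ of $\{v=0\}$, the occupation measure of the killed diffusion for one inequality in part i.\ and part ii., mollified approximations of $v$ (and of nonnegative $W^{2,1,2}$ functions) serving as admissible test functions for the reverse inequality, and tightness of the constraint at $v$ combined with the a.e.\ vanishing of the derivatives of $v$ on $\{v=0\}$ and the density of $\bar m_t$ for part iii. Your only deviations are cosmetic: you integrate the pointwise identity against $m^*_0$ instead of invoking the paper's lemma on random initial conditions, and in iii.(b) you use the symmetric perturbations $v\pm\varepsilon\phi$ where the paper scales $\phi\le v$ and argues by contradiction separately for the two signs --- a slightly cleaner variant of the same idea.
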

\begin{proof}
\textcolor{black}{Part i.}  By \textcolor{black}{Theorem 4.7, Chapter 3, in \cite{bensoussan1982applications}}, the
value function defined by \eqref{valuefct} is \textcolor{black}{a solution}
belonging to $W^{2,1,2}(\mathcal{Q})$, with $\mathcal{Q}:=(0,T) \times \mathcal{O}$ \footnote{\textcolor{black}{The Sobolev space $ W^{2,1,2}(\mathcal{Q})$ represents the set of functions $u$ such that $\partial_{t} u, \partial_{x_i} u, \partial_{x_i x_j} u \in L^2(\mathcal{Q})$, with $i,j=\overline{1,d}$, where the derivatives are understood in the sense of distributions.}}, which satisfies the following variational
inequality
\begin{align} \label{ineq}
\min \bigg(-\frac{\partial v}{\partial t}(t,x)-\mathcal{L}v(t,x)-f(t,x), v(t,x)\bigg)=0, \,\,\, (t,x) \in (0,T) \times {\mathcal O},\,\, \nonumber \\
v(t,x)=0, \,\, t \in (0,T), \,\, x \in \partial {\mathcal O}, \nonumber \\
v(T,x)=0, \,\, x \in {\mathcal O}.
\end{align}
First note that, by Lemma \ref{repres}, we have
\begin{align}\label{valuefct11}
v(0,X_0)=\underset{\tau \in \mathcal{T}([0,T])}\esssup\,\,\, \mathbb{E}\left[\int_0^{\tau \wedge \tau_{\mathcal O}^{X_0}}  f(s,X_s^{X_0})ds|\mathcal{F}_0\right] {\rm \,\, a.s.\,\,}
\end{align}
By classical results on optimal stopping and associated reflected Backward SDEs with random terminal time $T \wedge \mathcal{\tau}_\mathcal{O}^{X_0}$ (see e.g. Proposition 2.3 in  \cite{el1997reflected}), we get
\begin{align}\label{stop}
v(0,X_0)=\mathbb{E}\left[\int_0^{\bar{\tau}^{X_0}}  f(s,X_s^{X_0})ds|\mathcal{F}_0\right] {\rm \,\, a.s.\,\,},
\end{align}
where
\begin{align}\label{opt}
\bar{\tau}^{X_0}:=\inf \{0 \leq t \leq T:\,\, v(t,X_t^{X_0})=0\}.
\end{align}
Note that, by definition of the value function $v$, we have $\bar{\tau}^{X_0} \leq \tau_{\mathcal O}^{X_0} \wedge T$ a.s.\\
Taking now the expectation in $\eqref{stop}$, we derive that
\begin{align*}
\mathbb{E}[v(0,X_0)]=\mathbb{E}\left[\int_0^{\bar{\tau}^{X_0}}  f(s,X_s^{X_0})ds\right].
\end{align*}
 Remark that the occupational measure associated with the diffusion process $X_\cdot$ killed at the stopping time $\bar{\tau}^{X_0}$, that is $m_t(A):=\int_{\mathcal O} m^*_0(dx) \mathbb{P}[X_t^{x} \in A, t<\bar{\tau}^{x}]$, belongs to $\mathcal{A}(m_0^*)$.
Therefore, we have
\begin{align*}
\int_{\mathcal O} v(0,x)m^*_0(dx) \leq \sup_{m \in \mathcal{A}(m^*_0)}  \int_0^T \int_{\mathcal O} f(s,x)m_s(dx)ds.
\end{align*}

We now show the converse inequality.
Fix $m \in \mathcal{A}(m_0^*)$. Using a classical method of regularisation by convolution with a standard mollifier, with respect to both time and space (see, e.g., an extension of Meyers-Serrin's result - Theorem 3, p. 252, in \cite{evans}), the value function $v$ can be
approximated by a sequence of functions $\varphi^n \in C^{\infty}([0,T] \times {\mathcal O}, \mathbb{R}^+)$ such that $\varphi^n \rightarrow v$  in $W^{2,1,2}(\mathcal{Q}) \cap C([0,T], L^2({\mathcal O}))$ as $n \rightarrow  \infty$ and $\partial_t \varphi^n+\mathcal L\varphi^n$ is bounded.  Since $(\varphi^n)_{n \geq 1}$ are admissible test functions, they verify the constraint \eqref{constraint}. Therefore, using the assumptions on $m$ and passing to the limit, we derive that the value
function $v$ satisfies
\begin{align}\label{eqq1}
\int_{\mathcal O} v(0,x) m^*_0(dx) + \int_0^T \int_{\mathcal O}
\left\{\frac{\partial v}{\partial t} + \mathcal L
  v\right\}m_t(dx)dt \geq 0.
\end{align}
From the above inequality, we derive that
\begin{align}\label{eqqq}
\int_{\mathcal O} v(0,x) m^*_0(dx) \geq - \int_0^T \int_{\mathcal O}
\left\{\frac{\partial v}{\partial t} + \mathcal L
  v\right\}m_t(dx)dt.
\end{align}
Since $v$ satisfies the variational inequality $\eqref{ineq}$ and due to the positivity of $m$ and Assumption ($f$-Reg), we get
\begin{align*}
 - \int_0^T \int_{\mathcal O}
\left\{\frac{\partial v}{\partial t} + \mathcal L
  v\right\}m_t(dx)dt \geq \int_0^T \int_{\mathcal O} f(t,x)m_t(dx)dt.
\end{align*}
Combining the two above relations and by arbitrariness of $m \in \mathcal{A}(m_0^*)$, we get
\begin{align*}
\int_{\mathcal O} v(0,x) m^*_0(dx) \geq \sup_{m \in \mathcal{A}(m^*_0)} \int_0^T \int_{\mathcal O} f(t,x)m_t(dx)dt.
\end{align*}

Part ii. Since the stopping time $\bar{\tau}^{X_0}$ given by $\eqref{opt}$ is optimal for the stopping problem $\eqref{valuefct11}$, we derive that
\begin{align}\label{eqq}
\int_{\mathcal O} v(0,x) m^*_0(dx) = \int_0^T \int_{\mathcal O} f(t,x)m_t^*(dx)dt,
\end{align}
with $m^*$ defined by $m_t^*(A):=\int_{\mathcal O} m^*_0(dx) \mathbb{P}[X_t^{x} \in A,t<{\bar \tau}^{x}]$ for all $A \in \mathcal{B}({\mathcal O})$.\\
Using part i. and the fact that $m^* \in \mathcal{A}(m_0^*)$, the result follows.\\

Part iii. Let $m^*$ be defined in part ii. Since by the results above it is a maximizer,  we have \textcolor{black}{$\int_0^T\int_{\mathcal O} f(t,x)\bar{m}_t(dx)dt=\int_0^T\int_{\mathcal O} f(t,x)m^*_t(dx)dt$}. Therefore
\begin{align*}
0&=\int_0^T\int_{\mathcal O} f(t,x)\bigg(\bar{m}_t(dx)-m^*_t(dx)\bigg)dt \nonumber \\
& =\int_{v>0} f(t,x)\bar{m}_t(dx)dt+\int_{v=0} f(t,x)\bar{m}_t(dx)dt-\int_0^T\int_{\mathcal O} f(t,x)m^*_t(dx)dt \nonumber \\
& =\int_{v>0} \bigg( -\frac{\partial v}{\partial t} -\mathcal{L}v\bigg)\bar{m}_t(dx)dt-\int_0^T\int_{\mathcal O} f(t,x)m^*_t(dx)dt+\int_{v=0} f(t,x)\bar{m}_t(dx)dt, \nonumber \\
\end{align*}
where the last relation follows since $v$ satisfies the variational inequality $\eqref{ineq}$. Now, since $-\frac{\partial v}{\partial t} -\mathcal{L}v=0$ a.e. on $\{v=0\}$ and $m^*$ satisfies $\eqref{eqq}$, we get
\begin{align*}
0&=\int_0^T\int_{\mathcal O}\bigg( -\frac{\partial v}{\partial t} -\mathcal{L}v\bigg)\bar{m}_t(dx)dt-\int_0^T\int_{\mathcal O} f(t,x)m^*_t(dx)dt+\int_{v=0} f(t,x)\bar{m}_t(dx)dt \nonumber \\
&=\int_0^T\int_{\mathcal O}\bigg( -\frac{\partial v}{\partial t} -\mathcal{L}v\bigg)\bar{m}_t(dx)dt-\int_{\mathcal O} v(0,x) m^*_0(dx)+\int_{v=0} f(t,x)\bar{m}_t(dx)dt.
\end{align*}
Using the above relation, the inequality $\eqref{eqqq}$ and the fact that $f \leq 0$ a.e. on $\{v=0\}$, we finally obtain that
\begin{align*}
\int_{v=0} f(t,x)\bar{m}_t(dx)dt=0,
\end{align*}
and
\begin{align}\label{equation}
\int_0^T\int_{\mathcal O}\bigg( \frac{\partial v}{\partial t} +\mathcal{L}v\bigg)\bar{m}_t(dx)dt+\int_{\mathcal O} v(0,x) m^*_0(dx)=0.
\end{align}
Let us now show that \eqref{equation} implies that
\begin{align*}
\int_0^T\int_{{\mathcal O}}
\left\{\frac{\partial \phi}{\partial t} + \mathcal L
  \phi\right\}\bar{m}_t(dx)dt+\int_{\mathcal O} \phi(0,x) m^*_0(dx)=0,
\end{align*}
for all $C_c^\infty$ functions $\phi$ such that $\supp \phi \textcolor{black}{\subseteq} \{(t,x) \in [0,T] \times {\mathcal O}:\,\, v>0 \}$.

\textcolor{black}{First note that, by the same approximation procedure as the one used for the value function $v$  in Part i. (using an extension of Meyers-Serrin's result)}, any non-negative function $u$ in $W^{2,1,2}(\mathcal{Q})$ satisfies the constraint \eqref{eqq1}.

Let $\phi$ be a $C^\infty_c$ non-negative function such that $\supp \phi \subseteq \{(t,x) \in [0,T] \times {\mathcal O}:\,\, v>0 \}$. Up to an appropriate scale factor, one can assume that $\phi \leq v$. Suppose that
\begin{align}\label{equation1}
\int_0^T\int_{{\mathcal O}}
\left\{\frac{\partial \phi}{\partial t} + \mathcal L
  \phi\right\}\bar{m}_t(dx)dt+\int_{\mathcal O} \phi(0,x) m^*_0(dx)>0.
\end{align}
Subtracting $\eqref{equation1}$ from \eqref{equation}, we obtain that
\begin{align*}
\int_0^T\int_{{\mathcal O}}
\left\{\frac{\partial (v-\phi)}{\partial t} + \mathcal L
  (v-\phi)\right\}\bar{m}_t(dx)dt+\int_{\mathcal O} (v-\phi)(0,x) m^*_0(dx)<0,
\end{align*}
Since $v-\phi$ is a non-negative function belonging to $W^{2,1,2}(\mathcal{Q})$, we get a contradiction. This implies that for all non-negative $C^\infty_c$ functions $\phi$ such that $\supp \phi \subseteq \{(t,x) \in [0,T] \times {\mathcal O}:\,\, v>0 \}$ we have
\begin{align*}
\int_0^T\int_{{\mathcal O}}
\left\{\frac{\partial \phi}{\partial t} + \mathcal L
  \phi \right\}\bar{m}_t(dx)dt+\int_{\mathcal O} \phi(0,x) m^*_0(dx)=0.
\end{align*}
The result can be extended to an arbitrary $C^\infty_c$ function $\phi$ (which also takes negative values) such that $\supp \phi \subseteq \{(t,x) \in [0,T] \times {\mathcal O}:\,\, v>0 \}$. Using appropriate scaling factors and similar arguments as above,  one can show that $\int_0^T\int_{{\mathcal O}}
\left\{\frac{\partial \phi}{\partial t} + \mathcal L
  \phi \right\}\bar{m}_t(dx)dt+\int_{\mathcal O} \phi(0,x) m^*_0(dx) < 0$ and $\int_0^T\int_{{\mathcal O}}
\left\{\frac{\partial \phi}{\partial t} + \mathcal L
  \phi \right\}\bar{m}_t(dx)dt+\int_{\mathcal O} \phi(0,x) m^*_0(dx) > 0$ cannot be satisfied.
Hence, for all $C_c^\infty$ functions $\phi$ such that $\supp \phi \subseteq \{(t,x) \in [0,T] \times {\mathcal O}:\,\, v>0 \}$, we have
\begin{align}
\int_0^T\int_{{\mathcal O}}
\left\{\frac{\partial \phi}{\partial t} + \mathcal L
  \phi\right\}\bar{m}_t(dx)dt+\int_{\mathcal O} \phi(0,x) m^*_0(dx)=0.
\end{align}

\end{proof}

\textcolor{black}{We now illustrate the relation between the relaxed and strong formulation of  the optimization problem in the MFG context, as well as the relation with the mixed solutions introduced in \cite{bertucci2017optimal}}. 

\begin{theorem}\textbf{[MFG optimal stopping problem]}
\textcolor{black}{Let Assumptions (X-SDE), (X-PDE), (X-Reg) and
  ($f$-Exist-MFG) be satisfied.} Let $m^*$ be a Nash equilibrium of the relaxed MFG problem and let $v$ be the value function of the optimal stopping problem
\begin{align*}
v(t,x)=\textcolor{black}{\sup_{\textcolor{black}{\tau \in {\mathcal{T}^{t}_W([t,T])}}}} \mathbb{E} \left[\int_t^{\tau \wedge \tau_{\mathcal O}^{(t,x)}} f(s,X_s^{(t,x)}, m_s^*)ds\right],
\end{align*}
with $(t,x) \in [0,T] \times \mathbb{R}$ and $\tau_{{\mathcal O}}^{(t,x)}:=\inf \{ s \geq t: \,\,\, X_s^{(t,x)} \notin {\mathcal O} \}$.\\
We have
\begin{enumerate}[i.]
\item   \textcolor{black}{\textit{Relation with the strong formulation}}

$\int_{\mathcal O} v(0,x)m^*_0(dx)=\int_0^T \int_{\mathcal O} f(s,x,m_s^*)m^*_s(dx)ds.$

\item \textcolor{black}{\textit{Relation with mixed solutions}}

$m^*$ satisfies
\begin{enumerate}[a.]
\item  $\int_{\mathcal{S}}f(t,x, m_t^*)m^*_t(dx)dt=0$, with $\mathcal{S}:=\{(t,x) \in [0,T] \times {\mathcal O}: \,\, v(t,x)=0\}.$
\item For all $C^\infty_c$ functions $\phi$ such that $\supp \phi \subseteq \{(t,x) \in [0,T] \times {\mathcal O}:\,\, v>0 \}$, the following holds
\begin{align*}
\int_0^T\int_{{\mathcal O}}
\left\{\frac{\partial \phi}{\partial t} + \mathcal L
  \phi\right\}m_t^*(dx)dt+\int_{\mathcal O} \phi(0,x) m^*_0(dx)=0.
\end{align*}
\end{enumerate}
\end{enumerate}
\end{theorem}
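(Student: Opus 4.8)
The plan is to reduce the statement to the single-agent Theorem \ref{TH} by freezing the equilibrium measure inside the reward. Set
\[
\tilde f(t,x) := f(t,x,m^*_t).
\]
Under Assumption \ref{ass} with $h\equiv 0$ we have $f(t,x,m) = g(x)\,\bar f(t, g*m_t)$, hence $\tilde f(t,x) = \bar f^*(t)\,g(x)$ with $\bar f^*(t) := \bar f\bigl(t, g*m^*_t\bigr)$. First I would verify that $\tilde f$ meets the hypotheses of Proposition \ref{stoppingexist.lm}, part ii., with $K=1$: the factor $g$ satisfies the assumptions of Lemma \ref{apriori.lm}, part ii., by Assumption \ref{ass}, while $\bar f^*$ is bounded (as $\bar f$ is bounded) and measurable (being the composition of the Carath\'eodory map $\bar f$ with the measurable map $t\mapsto g*m^*_t$, which is finite by Corollary \ref{bounded.cor}). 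Consequently the value function $v$ in the statement coincides with the value function of the single-agent problem \eqref{valuefct} for the reward $\tilde f$, and all hypotheses of Theorem \ref{TH} — including Assumption \ref{Assump}, in force throughout this section — hold.

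The second observation is that the relaxed Nash property of $m^*$ is exactly the statement that $m^*$ maximizes the single-agent relaxed functional associated with $\tilde f$: by definition of a relaxed Nash equilibrium,
\[
\int_0^T\!\int_{\mathcal O} \tilde f(t,x)\, m_t(dx)\,dt \;\le\; \int_0^T\!\int_{\mathcal O} \tilde f(t,x)\, m^*_t(dx)\,dt
\]
for every $m\in\mathcal A(m^*_0)$. This is precisely the maximality condition to which parts i.\ and iii.\ of Theorem \ref{TH} apply, with the generic maximizer $\bar m$ taken to be $m^*$.

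Both conclusions then follow directly. For part i., Theorem \ref{TH}, part i., gives
\[
\int_{\mathcal O} v(0,x)\, m^*_0(dx) = \sup_{m\in\mathcal A(m^*_0)} \int_0^T\!\int_{\mathcal O} \tilde f(t,x)\, m_t(dx)\,dt,
\]
and since $m^*$ attains the supremum the right-hand side equals $\int_0^T\!\int_{\mathcal O} f(t,x,m^*_t)\, m^*_t(dx)\,dt$, which is the claimed identity. For part ii., applying Theorem \ref{TH}, part iii., to the maximizer $\bar m = m^*$ yields $\int_{\mathcal S}\tilde f(t,x)\, m^*_t(dx)\,dt = 0$, i.e.\ (a), together with the weak flow identity against all $C^\infty_c$ test functions $\phi$ supported in $\{v>0\}$, i.e.\ (b); one then rewrites $\tilde f(t,x)$ as $f(t,x,m^*_t)$ throughout.

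I do not expect a genuine obstacle: the substantive analysis is carried out entirely in Theorem \ref{TH}, and the present result is essentially a corollary obtained by freezing $m^*$. The only point requiring care is the verification that the frozen coefficient $\bar f^*$ is bounded and measurable, so that $\tilde f$ falls within the scope of Proposition \ref{stoppingexist.lm} (and hence of Theorem \ref{TH}); this is where Corollary \ref{bounded.cor}, which guarantees the finiteness and boundedness of $g*m^*_t$, is invoked. I would also record that no circularity arises: the measure $m^*$ is fixed once and for all as a relaxed Nash equilibrium (whose existence is provided by Theorem \ref{exist}), and $v$ is defined relative to this fixed $m^*$.
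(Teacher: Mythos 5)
Your proposal is correct and follows essentially the same route as the paper, whose proof is precisely to apply Theorem \ref{TH} to the frozen reward $f(\cdot,\cdot,m^*)$ (after checking it satisfies Proposition \ref{stoppingexist.lm}, part ii.) and to combine this with the Nash equilibrium property of $m^*$, which identifies $m^*$ as a maximizer of the single-agent relaxed functional. Your write-up merely makes explicit the verification steps (boundedness and measurability of $t\mapsto \bar f(t,g*m^*_t)$ via Corollary \ref{bounded.cor}, and the non-circularity of fixing $m^*$) that the paper leaves implicit.
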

\begin{proof}
The proof follows by using the results obtained in Theorem \ref{TH}
applied to the instantaneous reward function $f(\cdot,m^*)$ (which
satisfies Assumption \textcolor{black}{($f$-Exist)(ii)}, so that Theorem \ref{TH} can be applied), together with the Nash equilibrium property of $m^*$.
\end{proof}

\begin{remark}
It follows from the variational inequality $\eqref{ineq}$ that $f\leq 0$ on $\{v=0\}$. Therefore,
if $f\neq 0$ on $\{v=0\}$, $\int_{\{v=0\}}m^*_t(dx)\, dt = 0$. Such a solution is
called a \textit{pure solution} in \cite{bertucci2017optimal}, meaning that the
agent will exit the game immediately upon entering the exercise
region.
\end{remark}

\section{\textcolor{black}{Fixed-point algorithm and convergence in the case of potential games}}

 \textcolor{black}{We first show that, in the case of  \textit{potential games}, the search
for MFG equilibrium reduces to the maximization of a functional. The
reward function of a potential game satisfied the following
assumption.
\begin{assumption}[\textbf{$f$-Pot}]
The reward function is of the form
$$
f(t,x,m) = \sum_{i=1}^K \bar f_i \left(t,g_i *m_t\right) g_i(x),
$$
where for each $i$, $\bar f_i$ is bounded, measurable in $t$, and continuous
and decreasing in the second argument, and $g_i\in C^2(\mathcal O; \mathbb{R})$ such that $g_i$, $|\!|\nabla_{_X} g_i|\!|$ and
  $|\!|H_{_X} g_i|\!|$ are bounded. Moreover,  for each $i$, there exists $\overline F_i:[0,T] \times \mathbb{R} \mapsto \mathbb{R}$ such that $\partial_x \overline F_i(t,x) = \bar
f_i(t,x)$ and $\overline F_i(\cdot,0) \in L^1([0,T])$.
\end{assumption}} 

\begin{proposition}\label{Potential}
Let Assumption ($f$-Pot) be satisfied.
Then $m^*\in \mathcal
A(m^*_0)$ is a Nash
equilibrium of the relaxed optimal stopping problem if and only if
$$
F(m^*) = \sup_{m\in \mathcal A(m_0^*)} F(m),
$$
where
$$
F(m) = \sum_{i=1}^K \int_0^T \overline F_i(t,g* m_t) dt,
$$
\end{proposition}
\begin{proof}
Assume that $m^*$ is a Nash equilibrium. By definition we then have
$$
\int_0^T \sum_i \partial_x \overline F_i(t,g*m^*_t) (g*m_t - g*m^*_t)
dt\leq 0.
$$
Since $\bar f_i$ is decreasing in the second argument, $\bar F_i$ is
concave in the second argument, and by concavity this implies that $F(m^*)\geq F(m)$. Conversely,
assume that $m^*$ is a maximizer of $F$. For every $\alpha\in[0,1]$
and every $m\in \mathcal A(m^*_0)$,
then,
$$
\sum_{i=1}^K \int_0^T \left\{\overline F_i(t,g*m^*_t) - \overline F_i
  (t,\alpha g* m_t + (1-\alpha) g* m^*_t )\right\}dt \geq 0,
$$
which implies that
$$
\sum_{i=1}^K \int_0^T \partial_x \overline F_i(t,\xi_t) g*(m^*_t-m_t)dt \geq 0,
$$
where $\xi_t \in [g*m^*_t, \alpha g* m_t + (1-\alpha) g*
m^*_t]$. Making $\alpha$ tend to $0$ and using the dominated
convergence theorem, we conclude that
$$
\sum_{i=1}^K \int_0^T \bar f_i(t,g*m^*_t) g*(m^*_t-m_t)dt \geq 0.
$$
\end{proof}

\textcolor{black}{We propose now a fixed-point algorithm for \textit{potential
  games}. We  use
the notations of Proposition \ref{Potential}.}\\

\textbf{Algorithm}
\begin{itemize}
\item Fix $m^0 \in \mathcal{A}(m_0^*);$
\item For $k=0$ to $N$
\begin{itemize}
\item[$\bullet$] Compute $u^{k}$ the solution of the obstacle problem \eqref{ineq} associated with $f(\cdot, m^{k})$;
\item[$\bullet$] Let $\Tilde{m}^{k} \in \mathcal{A}(m_0^*)$ be such that
$\Tilde{m}_t^{k}(A)=\int_{\mathcal O} m^*_0(dx)\mathbb P[X_t^{x} \in A; \,\, t<\tau_k^{x}]$, for all $A \in \mathcal{B}({\mathcal O})$, where $\tau_k^{x}:=\inf \{0 \leq t \leq T:\,\, u^k(t,X^{x}_t)=0\}$\footnote{Note that, for each $k$ from $0$ to $N$, we extend $u^k$ such that $u^k(t,x)=0$ for all $t \in [0,T]$ and $x \notin \mathcal{O}$. Therefore, we have $\tau_k^{x} \leq \tau_\mathcal{O}^{x} \wedge T$ a.s.};
\item[$\bullet$] Let $\rho^{k}$ be a maximizer of  $\rho \mapsto F(m^k+\rho(\Tilde{m}^k-m^k));$
\item[$\bullet$] Set $m^{k+1}:=m^{k}+\rho^k(\Tilde{m}^k-m^{k})$;
\item[$\bullet$] Set $k \leftarrow k+1$.
\end{itemize}
\end{itemize}
In the above algorithm, $N$ represents the number of iterations.\\

For each $m \in \mathcal{A}(m^*_0)$, define
\begin{align*}
\mathcal{C}(m):=& \bigg\{ m+\hat{\rho}(m^*-m), \,\,\,  m^* \in \underset{m' \in \mathcal{A}(m^*_0)}{\arg \max}  \int_0^T \int_{ {\mathcal O}} f(m_{\textcolor{black}{t}})m_{\textcolor{black}{t}}'\textcolor{black}{(}dx\textcolor{black}{)}dt,\,\,\bigg. \nonumber \\ & \bigg. \hat{\rho}\in \underset{\rho \in [0,1]}{\argmax}\,\, F(m+\rho(m^*-m)) \bigg\}.
\end{align*}

\begin{lemma}\label{closed}
\textcolor{black}{Let Assumptions (X-SDE), (X-PDE), (X-Reg) and ($f$-Pot) be satisfied.} The set{-}valued map $m \in \mathcal{A}(m_0^*) \mapsto \mathcal{C}(m)$ has a closed graph and $\bar{m} \in \mathcal{A}(m^*_0)$ is a relaxed Nash equilibrium if and only if it satisfies $\bar{m} \in \mathcal{C}(\bar{m})$.
\end{lemma}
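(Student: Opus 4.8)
The plan is to prove Lemma~\ref{closed} in two parts: first, the equivalence between being a relaxed Nash equilibrium and satisfying the fixed-point condition $\bar m \in \mathcal{C}(\bar m)$, and second, the closedness of the graph of $\mathcal{C}$. For the equivalence, I would exploit the potential-game structure of Proposition~\ref{Potential}, which tells us that $\bar m$ is a relaxed Nash equilibrium if and only if it maximizes the functional $F$ over $\mathcal{A}(m^*_0)$. If $\bar m \in \mathcal{C}(\bar m)$, then by definition there is some $m^* \in \argmax_{m'} \int_0^T\int_{\mathcal O} f(\bar m_t) m'_t(dx)\,dt$ and some $\hat\rho \in \argmax_{\rho\in[0,1]} F(\bar m + \rho(m^*-\bar m))$ with $\bar m = \bar m + \hat\rho(m^*-\bar m)$, forcing either $\hat\rho = 0$ or $m^* = \bar m$. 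The key point is that $\hat\rho = 0$ maximizing $\rho \mapsto F(\bar m + \rho(m^*-\bar m))$ on $[0,1]$ means the right derivative at $0$ is nonpositive; by concavity of $F$ (which follows since each $\overline F_i$ is concave in its second argument, as $\bar f_i$ is decreasing) this gives $\sum_i \int_0^T \bar f_i(t, g_i*\bar m_t)\, g_i*(m^*_t - \bar m_t)\,dt \leq 0$, i.e. $\bar m$ satisfies the first-order Nash condition, hence is a Nash equilibrium by Proposition~\ref{Potential}. Conversely, if $\bar m$ is a Nash equilibrium, I would take $m^* = \bar m$ itself in the defining set (it lies in the relevant $\argmax$ because Nash equilibria are exactly the $F$-maximizers) and $\hat\rho$ arbitrary, so that $\bar m + \hat\rho(\bar m - \bar m) = \bar m \in \mathcal{C}(\bar m)$.

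For the closed-graph property, I would follow the template already used in the proof of Theorem~\ref{exist}. Take sequences $m^n \to m$ and $c^n \to c$ weakly in $\mathcal{A}(m^*_0)$ with $c^n \in \mathcal{C}(m^n)$, so that $c^n = m^n + \hat\rho^n(m^{*,n} - m^n)$ for appropriate $m^{*,n}$ and $\hat\rho^n \in [0,1]$. Since $[0,1]$ is compact I would extract a convergent subsequence $\hat\rho^n \to \hat\rho$, and since $\mathcal{A}(m^*_0)$ is sequentially compact by Lemma~\ref{compact.lm} I would extract a further subsequence along which $m^{*,n}$ converges weakly to some $m^* \in \mathcal{A}(m^*_0)$. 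The task is then to show that $m^*$ lies in $\argmax_{m'} \int_0^T\int_{\mathcal O} f(m_t) m'_t(dx)\,dt$ and that $\hat\rho \in \argmax_{\rho} F(m + \rho(m^*-m))$, and that $c = m + \hat\rho(m^*-m)$.

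The passage to the limit in the inner $\argmax$ condition is exactly the content of the closed-graph argument for $\Theta$ in Theorem~\ref{exist}: using that $g_i*m^n \to g_i*m$ and $g_i*m^{*,n} \to g_i*m^*$ in $L^1([0,T])$ (via Lemma~\ref{apriori.lm}, the bounded-variation argument, and Corollary~\ref{bounded.cor} for the uniform bound), together with the continuity of each $\bar f_i$ and dominated convergence, one transfers the optimality inequality $\int_0^T\int f(m^n_t)\, m^{*,n}_t \geq \int_0^T \int f(m^n_t)\,\hat m_t$ to the limit. For the outer condition I would show $F$ is continuous along these weakly convergent sequences — again reducing $F$ to expressions in $g_i*m_t$ and using $L^1$-convergence plus the integrability of $\overline F_i(\cdot,0)$ — so that $\hat\rho^n$ maximizing $\rho\mapsto F(m^n+\rho(m^{*,n}-m^n))$ passes to $\hat\rho$ maximizing $\rho\mapsto F(m+\rho(m^*-m))$. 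The main obstacle I anticipate is precisely this joint limit: one must simultaneously pass to the limit in the two coupled $\argmax$ operators, and care is needed because the objective of the inner problem depends on $m^n$ (which is varying) while the relevant optimizer $m^{*,n}$ also varies. The robustness of the $L^1$-convergence of the convolutions $g_i * (\cdot)$, which linearizes the dependence on the measures, is what makes both limits tractable and is the crux on which the whole argument rests.
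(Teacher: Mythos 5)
Your proposal is correct and takes essentially the same route as the paper: the fixed-point equivalence rests on the same dichotomy ($\hat\rho=0$ forcing the first-order inequality along the segment, which combined with the maximality of $m^*$ in the linear problem yields the Nash property, and conversely $m^*=\bar m$ making $F$ constant in $\rho$), and the closed-graph part is the paper's exact scheme of extracting subsequences of $\hat\rho^n\in[0,1]$ and $m^{*,n}\in\mathcal A(m_0^*)$ and passing to the limit in both $\argmax$ conditions via the $L^1([0,T])$ convergence of $g_i*m^n$ and $g_i*m^{*,n}$ as in Theorem \ref{exist}, Corollary \ref{bounded.cor} and dominated convergence. If anything, you are slightly more explicit than the paper on two points it leaves implicit, namely the identification of the limit $c=m+\hat\rho(m^*-m)$ and the continuity of $F$ along these sequences (via boundedness of $\bar f_i$ and integrability of $\overline F_i(\cdot,0)$).
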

\begin{proof}

Let $(m^{n})_{n\ge 1} \in \mathcal{A}(m_0^*)$ be a sequence converging weakly to some $\hat{m} \in \mathcal{A}(m_0^*)$ and $\bar{m}^{n}=m^{n}+\hat{\rho}^{n}(m^{n^*}-m^{n})$ such that $\bar{m}^{n} \in \mathcal{C}(m^{n})$ weakly converging to some $\bar{m}$. Let us prove that $\bar{m} \in \mathcal{C}(\hat{m})$. Taking subsequences if necessary, we can assume that $\hat{\rho}^{n}$ converges to some $\hat{\rho} \in [0,1]$ and $m^{n^*}$ weakly converges to some $m^*$.

Since $m^{n^*}$ maximizes the map $m \mapsto \int_0^T \int_{\mathcal O} f(m^{n}_{{t}})m_{t}(dx)dt$, we get that
\begin{align*}
\int_0^T \int_{\mathcal O} f(m^{n}_{{t}})m^{n^*}_{{t}}(dx)dt \geq \int_0^T \int_{\mathcal O} f(m^{n}_{{\textcolor{black}{t}}})m_{{t}}(dx)dt,
\end{align*}
for all $m \in \mathcal{A}(m_0^*)$. For simplicity, we consider here the case $K=1$ and drop the index $i$. Using the same arguments as \textcolor{black}{those} in the proof of Theorem \ref{exist}, we may say that, up to taking subsequences, the sequence $(g*m^{\textcolor{black}{n}})_{n\geq 1}$ (resp. $(g*m^{\textcolor{black}{n}^*})_{n\geq 1}$) converges
in $L^1([0,T])$ to $g*\hat{m}$ (resp. $g*m^*$).
Due to the continuity of $\bar{f}$, we derive that $\bar{f}(t,g*m^{\textcolor{black}{n}}_{{\textcolor{black}{t}}})g*m^{\textcolor{black}{n}^*}_{{\textcolor{black}{t}}}$ (resp.  $\bar{f}(t, g*m^{\textcolor{black}{n}}_{{\textcolor{black}{t}}})g*m_{\textcolor{black}{t}}$) converges for a.e. $t$ to $\bar{f}(t,g*\hat{m}_{\textcolor{black}{t}})g*m^*_{\textcolor{black}{t}}$ (resp.  $\bar{f}(t,g*\hat{m}_{\textcolor{black}{t}})g*m_{\textcolor{black}{t}}$). By Corollary \ref{bounded.cor}, $g*
m^{\textcolor{black}{n}^*}$ is uniformly bounded, therefore, by appealing to the dominated \textcolor{black}{convergence} theorem, we derive
\begin{align*}
\int_0^T \int_{\mathcal O} f(\hat{m}_{\textcolor{black}{t}})m_{\textcolor{black}{t}}^*\textcolor{black}{(}dx\textcolor{black}{)}dt \geq \int_0^T \int_{\mathcal O} f(\hat{m}_{\textcolor{black}{t}})m_{\textcolor{black}{t}}\textcolor{black}{(}dx\textcolor{black}{)}dt,
\end{align*}
for all $m \in \mathcal{A}(m_0^*)$, that is
\begin{align*}
m^* \in \underset{m \in \mathcal{A}(m^*_0)}{\arg \max}  \int_0^T \int_{ {\mathcal O}} f(\hat{m}_{\textcolor{black}{t}})m_{\textcolor{black}{t}}\textcolor{black}{(}dx\textcolor{black}{)}dt.
\end{align*}
Now it remains to show that $\hat{\rho}$ is a maximizer of $\rho \mapsto F(\hat{m}+\rho(m^*-\hat{m}))$. For each $n$, we have
$F(m^{\textcolor{black}{n}}+\hat{\rho}^{\textcolor{black}{n}}(m^{\textcolor{black}{n}^*}-m^{\textcolor{black}{n}})) \geq F(m^{\textcolor{black}{n}}+\rho(m^{\textcolor{black}{n}^*}-m^{\textcolor{black}{n}}))$, for all $\rho \in [0,1]$, for all $n$. Taking the limit $n \rightarrow \infty$ and using similar arguments as above, as well as the assumptions on $F$, we get
$$F(\hat{m}+\hat{\rho}(m^*-\hat{m})) \geq F(\hat{m}+\rho(m^*-\hat{m})), \,\, {\rm for \,\, all\,\,} \rho \in [0,1].$$ To conclude, we have $\bar{m} \in \mathcal{C}(\hat{m})$.

It is clear that, if  $m \in \mathcal{A}(m_0^*)$ is a relaxed Nash equilibrium, then it satisfies $m \in \mathcal{C}(m)$. Conversely, one can show that if $m \in \mathcal{C}(m)$, then $m$ corresponds to a relaxed Nash equilibrium. Indeed, if $m \in \mathcal{C}(m)$, then we have $\hat{\rho}=0$ or $m^*=m$. If $\hat{\rho} = 0$, then $\int_0^T \int_{\mathcal O}  f(m_{\textcolor{black}{t}})(m_{\textcolor{black}{t}}^*\textcolor{black}{(}dx\textcolor{black}{)}-m_{\textcolor{black}{t}}\textcolor{black}{(}dx\textcolor{black}{)})dt\leq 0.$ Since $m^*$ is a maximizer of the map $m' \mapsto \int_0^T \int_{ {\mathcal O}} f(m_{\textcolor{black}{t}})m_{\textcolor{black}{t}}'\textcolor{black}{(}dx\textcolor{black}{)}dt$, we derive that
$\int_0^T \int_{\mathcal O} f(m_{\textcolor{black}{t}})(m_{\textcolor{black}{t}}^*\textcolor{black}{(}dx\textcolor{black}{)}-m_{\textcolor{black}{t}}\textcolor{black}{(}dx\textcolor{black}{)})dt=0,$ which implies that $m$ corresponds to a relaxed Nash  equilibrium. If $m^*=m$, the conclusion is clear.
\end{proof}

We now give the following convergence result.

\begin{theorem}
\textcolor{black}{Let Assumptions (X-SDE), (X-PDE) , (X-Reg) and ($f$-Pot) be satisfied.} Then the cluster points of the sequence $(m^n)_{\textcolor{black}{n\ge 1}}$ generated by the previous
algorithm belong to the set of relaxed Nash equilibria and the
sequence  $(u^n(0,x))_{\textcolor{black}{n\ge 1}}$ converges for all $x \in {\mathcal O}$ to $\bar
u(0,x)$, the value function of the obstacle problem associated with
cost functional $f(\cdot,\bar m)$, where $\bar m$ is a relaxed Nash
equilibrium.
\end{theorem}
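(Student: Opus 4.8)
The plan is to recognize the algorithm as a Frank--Wolfe (conditional gradient) scheme for maximizing the concave potential $F$ over the convex compact set $\mathcal A(m_0^*)$, and to combine the monotonicity of the potential values with the closed--graph property of $\mathcal C$ from Lemma \ref{closed}. First I would record the structural facts. By Theorem \ref{TH}, part ii, applied to the running reward $f(\cdot,m^n)$, the measure $\tilde m^n$ is a maximizer of the linear functional $m'\mapsto\int_0^T\int_{\mathcal O} f(t,x,m^n_t)m'_t(dx)\,dt$, so $m^{n+1}\in\mathcal C(m^n)$. Since $\rho^n$ is chosen by exact line search and $\rho=0$ is admissible, $F(m^{n+1})\ge F(m^n)$, hence $(F(m^n))$ is nondecreasing and converges to a finite $F^*$ (finiteness because $g*m_t$ is uniformly bounded by Corollary \ref{bounded.cor} and $\overline F(\cdot,0)\in L^1$). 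Moreover $F$ is weakly sequentially continuous on $\mathcal A(m_0^*)$: along weakly convergent sequences $g*m^n\to g*m$ in $L^1([0,T])$ exactly as in the proof of Theorem \ref{exist}, and $\bar f$ is bounded and continuous.

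For the first assertion I would take a cluster point $\bar m$, i.e.\ a subsequence $m^{n_j}$ converging weakly to $\bar m$, and pass, using sequential compactness of $\mathcal A(m_0^*)$ (Lemma \ref{compact.lm}) and of $[0,1]$, to a further subsequence with $\tilde m^{n_j}$ converging weakly to some $\tilde m$ and $\rho^{n_j}\to\rho^*$; then $m^{n_j+1}\to\ell:=\bar m+\rho^*(\tilde m-\bar m)$ weakly. By the closed--graph property and its proof in Lemma \ref{closed}, $\ell\in\mathcal C(\bar m)$, with $\tilde m$ a maximizer of the linear functional at $\bar m$ and $\rho^*$ a maximizer of $\phi(\rho):=F(\bar m+\rho(\tilde m-\bar m))$. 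Weak continuity of $F$ and $F(m^n)\to F^*$ give $F(\bar m)=F(\ell)=F^*$, whence $\phi(0)=\phi(\rho^*)=\max_{[0,1]}\phi$. The function $\phi$ is concave and differentiable, with
\begin{equation*}
\phi'(0)=\int_0^T\int_{\mathcal O} f(t,x,\bar m_t)\,(\tilde m_t-\bar m_t)(dx)\,dt\ge 0,
\end{equation*}
the inequality holding because $\tilde m$ is the linear maximizer at $\bar m$. Since $\phi$ attains its maximum at $0$, also $\phi'(0)\le 0$, so the Frank--Wolfe gap $\phi'(0)$ vanishes. This means $\int_0^T\int_{\mathcal O} f(t,x,\bar m_t)(m'_t-\bar m_t)(dx)\,dt\le 0$ for all $m'\in\mathcal A(m_0^*)$, which is precisely the relaxed Nash condition; equivalently $\bar m\in\mathcal C(\bar m)$. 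The main obstacle is exactly here: because $\bar f$ is only assumed bounded and continuous, one cannot control the Frank--Wolfe curvature and force the step $\rho^n(\tilde m^n-m^n)$ to vanish. The ``plateau'' argument above---equal potential values at the two endpoints together with concavity---circumvents this by killing the first--order gap directly, with no convergence--rate estimate.

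For the second assertion I would exploit the factorized form $f(t,x,m)=g(x)\bar f(t,g*m_t)$ to obtain, for every $x\in\mathcal O$, the stability estimate
\begin{equation*}
|u^n(0,x)-\bar u(0,x)|\le \|g\|_\infty\int_0^T\big|\bar f(t,g*m^n_t)-\bar f(t,g*\bar m_t)\big|\,dt,
\end{equation*}
which is uniform in the stopping time (the time factor is deterministic and $g$ is bounded, so the difference of rewards is dominated independently of $\tau$ and of $x$). It then suffices to prove that the right--hand integral tends to $0$ for one fixed relaxed Nash equilibrium $\bar m$. I would argue by contradiction: along any subsequence keeping this integral bounded away from $0$, compactness yields a further weakly convergent sub--subsequence whose limit $\bar m'$ is a relaxed Nash equilibrium by the first part; along it $g*m^n\to g*\bar m'$ in $L^1$, so $\bar f(t,g*m^n_t)\to\bar f(t,g*\bar m'_t)$ boundedly a.e.\ and the integral tends to $0$. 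By the uniqueness Theorem \ref{Uniqueness} (valid under Assumption \ref{ass} with $h\equiv0$), $\bar f(t,g*\bar m'_t)=\bar f(t,g*\bar m_t)$ a.e., contradicting the lower bound. Hence the whole sequence converges and $u^n(0,x)\to\bar u(0,x)$ for every $x$; the same uniqueness shows that $f(\cdot,\bar m)$, and therefore $\bar u$, is independent of the chosen equilibrium, so the limit is well defined.
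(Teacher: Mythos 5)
Your proof is correct and takes essentially the same route as the paper's: the same ingredients appear in the same roles, namely Theorem \ref{TH}(ii) to identify $\tilde m^n$ as a linear maximizer so that $m^{n+1}\in\mathcal C(m^n)$, compactness of $\mathcal A(m_0^*)$ (Lemma \ref{compact.lm}), the closed graph of $\mathcal C$ (Lemma \ref{closed}), monotonicity and weak sequential continuity of $F$ along subsequences, and, for the second assertion, the pointwise stability estimate on $u^n(0,x)$ combined with $L^1([0,T])$-convergence of $g*m^n$ and the uniqueness of the Nash value (Theorem \ref{Uniqueness}). The only (cosmetic) difference is that you annihilate the first-order Frank--Wolfe gap directly via concavity of the line-search function $\phi$ on the value plateau, whereas the paper argues by contradiction (a positive gap forces $\hat\rho>0$ and a strict increase of $F$, contradicting $F(m)=F(m_1)$) --- the two arguments are equivalent.
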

\begin{proof}
First note that, by using the definition of $\Tilde{m}^n$ and Theorem $\ref{TH}$ \textcolor{black}{part ii.}, we get that $\Tilde{m}^n \in \underset{m' \in \mathcal{A}(m^*_0)}{\arg \max}  \int_0^T \int_{ {\mathcal O}} f(m^n_{\textcolor{black}{t}})m_{\textcolor{black}{t}}'\textcolor{black}{(}dx\textcolor{black}{)}dt$. We thus have $m^{\textcolor{black}{{n+1}}} \in \mathcal{C}(m^{\textcolor{black}{{n}}})$, for all $n$.

 Let $(m^{k_n})_{\textcolor{black}{n\ge 1}}$ be a sequence converging weakly to some $m$, and taking a subsequence again if necessary, we may also assume that $m^{k_n+1}$ converges to some $m_1$. As by the previous theorem the set\textcolor{black}{-}valued map $m \in \mathcal{A}(m_0^*) \mapsto \mathcal{C}(m)$ has a closed graph, we have $m_1 \in \mathcal{C}(m)$, that is $m_1=m+\hat{\rho}(m^*-m)$, with $m^* \in \underset{m' \in \mathcal{A}(m^*_0)}{\arg \max} \int_0^T \int_{\mathcal O} f(m_{\textcolor{black}{t}})m_{\textcolor{black}{t}}'\textcolor{black}{(}dx\textcolor{black}{)}dt$  and $\hat{\rho} \in \underset{\rho \in [0,1]}{\arg \max}\,\,F(m+\rho(m^*-m)).$

Now, since the sequence $(F(m^n))_{\textcolor{black}{n\ge 1}}$ is increasing, one has $F(m)=F(m_1).$ Assume now that $m$ is not a Nash equilibrium, that is $m \notin \underset{m' \in \mathcal{A}(m^*_0)}{\arg \max} \int_0^T \int_{\mathcal O} f(m_{\textcolor{black}{t}})m_{\textcolor{black}{t}}'\textcolor{black}{(}dx\textcolor{black}{)}dt$. Therefore, $\int_0^T \int_{\mathcal O} f(m_{\textcolor{black}{t}})(m_{\textcolor{black}{t}}^*\textcolor{black}{(}dx\textcolor{black}{)}-m_{\textcolor{black}{t}}\textcolor{black}{(}dx\textcolor{black}{)})dt>0$. Moreover, using Lemma $\ref{closed}$, we have $m \notin \mathcal{C}(m)$ which implies that $\hat{\rho}>0$. Hence,  we conclude that $F(m_1)=F(m+\hat{\rho}(m^*-m))>F(m)$, which represents a contradiction.

Let us now prove the convergence of the sequence $(u^n(0,x))_{\textcolor{black}{n\ge 1}}$ for all $x \in {\mathcal O}$.

Since all Nash equilibria $m$ lead to the same value (see Theorem \ref{Uniqueness}), we can define $\bar{u}$ as being the solution of the obstacle problem associated with $f(\cdot,\bar{m})$, with $\bar{m}$ a Nash equilibrium.

Let $u^{k_n}$ be a given subsequence. Up to \textcolor{black}{subtracting} a subsequence again, one can assume that $m^{k_n}$ converges weakly to some $m^* \in \mathcal{A}(m_0^*)$, which, by the results above, is a relaxed Nash equilibrium.

Fix $x \in \mathcal{O}$. We have:
\begin{align*}
|u^{k_n}(0,x)-\bar{u}(0,x)| &\leq \sup_{\tau \in \textcolor{black}{\mathcal{T}^W([0,T])}} \mathbb{E}\left[\int_0^{\tau \wedge \tau_{\mathcal O}^{\textcolor{black}{x}}} \left|f(s,X_s^{\textcolor{black}{x}},m_s^{k_n})-f(s,X_s^{\textcolor{black}{x}},m_s^*)\right|ds\right] \nonumber \\
& \leq \mathbb{E}\left[\int_0^T \left|f(s,X_s^{\textcolor{black}{x}},m_s^{k_n})-f(s,X_s^{\textcolor{black}{x}},m_s^*)\right|ds\right].
\end{align*}
Using again the convergence in $L^1([0,T])$ of $g*m^{k_n}$ to $g*m^*$, the assumptions on $f$ together with Lebesgue Theorem, we get that the last term of the above inequality converges to 0. We can conclude that from every subsequence of $u^{k_n}(0,x)$, we can extract a further subsequence which converges to $\bar{u}(0,x)$. The result follows.
\end{proof}

\section{Acknowledgement}
Peter Tankov gratefully acknowledges financial support from the LABEX
ECODEC (ANR-11-IDEX-0003/LabexEcodec/ANR-11-LABX-0047) and from the
FIME Research Initiative.
\appendix
\section{Appendix}

We show here that the representation \eqref{valuefct} remains true when the initial condition $\xi$ is random. More precisely, we have the following result.
\begin{lemma}\label{repres}
Let $\xi \in \textcolor{black}{L}^2({\mathcal O}, \mathcal{F}_0)$. Then we have
\begin{align}\label{res}
v(0,\xi)=\underset{\tau \in \mathcal{T}([0,T])}\esssup\,\, \mathbb{E}\left[\int_0^{\tau \wedge \tau_{\mathcal O}^{\textcolor{black}{\xi}}}f(s,X_s^{\textcolor{black}{\xi}})ds | \mathcal{F}_0 \right] {\rm \,\, a.s.}
\end{align}
\end{lemma}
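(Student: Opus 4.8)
The plan is to exploit the variational inequality \eqref{ineq} satisfied by the deterministic value function $v$, to apply an It\^o-type formula to the random-start process $X^\xi$, and to conclude by the same verification argument as in the proof of Theorem \ref{TH}, part i. First I would record that, because $X_0$ (hence any $\xi\in L^2(\mathcal O,\mathcal F_0)$) is independent of $W$, the process $W$ remains a Brownian motion with respect to the enlarged filtration $\mathbb F$. Consequently $X^\xi$ is a well-defined $\mathbb F$-adapted solution of the SDE started from $\xi$, and any $\mathbb F$-stochastic integral against $W$ with bounded integrand is a true $\mathbb F$-martingale; in particular such integrals have zero $\mathcal F_0$-conditional expectation. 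Throughout, I abbreviate $J(\tau,\xi):=\mathbb E[\int_0^{\tau\wedge\tau_{\mathcal O}^\xi} f(s,X_s^\xi)\,ds\mid\mathcal F_0]$.

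For the inequality $v(0,\xi)\ge \esssup_\tau J(\tau,\xi)$, I would fix an arbitrary $\tau\in\mathcal T([0,T])$ and apply the It\^o--Krylov formula to $v(s,X^\xi_s)$ on $[0,\tau\wedge\tau_{\mathcal O}^\xi]$; this is the step relying on uniform ellipticity (Assumption \ref{Assump} ii.) and the membership $v\in W^{2,1,2}(\mathcal Q)$. Taking $\mathcal F_0$-conditional expectations annihilates the stochastic term and yields
$$
v(0,\xi)=\mathbb E\Big[v(\tau\wedge\tau_{\mathcal O}^\xi,X^\xi_{\tau\wedge\tau_{\mathcal O}^\xi})-\int_0^{\tau\wedge\tau_{\mathcal O}^\xi}\big(\tfrac{\partial v}{\partial t}+\mathcal L v\big)(s,X^\xi_s)\,ds\ \Big|\ \mathcal F_0\Big].
$$
Since $-\frac{\partial v}{\partial t}-\mathcal L v\ge f$ and $v\ge 0$ by \eqref{ineq}, the right-hand side dominates $J(\tau,\xi)$, and taking the essential supremum over $\tau$ gives one inequality.

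For the reverse inequality I would show the bound is attained by $\bar\tau^\xi:=\inf\{0\le s\le T: v(s,X^\xi_s)=0\}$, which is an $\mathbb F$-stopping time valued in $[0,T]$ because $v$ is continuous and $X^\xi$ is $\mathbb F$-adapted. On $\{v>0\}$ the variational inequality holds with equality, $-\frac{\partial v}{\partial t}-\mathcal L v=f$ a.e., so the same It\^o--Krylov computation run up to $\bar\tau^\xi$ produces an identity rather than an inequality. Moreover, since $v$ vanishes on the parabolic boundary one has $\bar\tau^\xi\le\tau_{\mathcal O}^\xi\wedge T$ and $v(\bar\tau^\xi,X^\xi_{\bar\tau^\xi})=0$, so the terminal term drops out and $v(0,\xi)=J(\bar\tau^\xi,\xi)\le\esssup_\tau J(\tau,\xi)$. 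Combining the two inequalities yields \eqref{res}.

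The main obstacle is the justification of the It\^o-type formula for the Sobolev-class value function: $v$ is only known to lie in $W^{2,1,2}(\mathcal Q)$, so the classical It\^o formula does not apply and one must invoke Krylov's extension, valid here thanks to uniform ellipticity and the one-dimensional setting, together with the mollification argument $\varphi^n\to v$ already used in the proof of Theorem \ref{TH} while controlling the terms $\partial_t\varphi^n+\mathcal L\varphi^n$. A secondary point is the interchange of conditional expectation with the stopped stochastic integral, which I would handle via the boundedness of the integrand on the bounded domain $\mathcal O$ (using the polynomial growth of $\partial_x v$ and optional stopping).
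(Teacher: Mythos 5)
Your argument is correct in substance but takes a genuinely different route from the paper. The paper proves Lemma \ref{repres} with no PDE machinery at all: it first verifies \eqref{res} for simple random variables $\xi^n=\sum_{i=1}^n x_i\mathbf 1_{A_i}$ with $A_i\in\mathcal F_0$, where the independence of $W$ from $\mathcal F_0$ lets the conditional essential supremum split across the atoms into the deterministic suprema $v(0,x_i)$, and then passes to general $\xi$ by approximating with simple variables, using continuity of $v$ in $x$ on one side, and on the other side the a.s.\ convergence $\tau_{\mathcal O}^{\xi^n}\to\tau_{\mathcal O}^{\xi}$ (continuity of first passage times for elliptic diffusions, citing Pardoux), flow continuity of the SDE, and dominated convergence. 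You instead run a verification argument off the variational inequality \eqref{ineq}: the It\^o--Krylov formula applied to $v(s,X_s^\xi)$, conditioned on $\mathcal F_0$, gives $v(0,\xi)\geq J(\tau,\xi)$ for every $\mathbb F$-stopping time $\tau$, with equality at $\bar\tau^\xi$. This is not circular, since \eqref{ineq} comes from Bensoussan's theorem independently of the lemma, and your route buys more: it simultaneously establishes the optimality of $\bar\tau^\xi$, i.e.\ \eqref{stop}, which the paper obtains separately from reflected BSDE results (El Karoui et al.), and it treats all $\mathbb F$-stopping times directly, avoiding the implicit reduction to $\mathbb F^W$-stopping times on each atom that underlies the paper's simple-variable identity. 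The price is heavier regularity bookkeeping, and one supporting claim of yours is off as stated: $\partial_x v$ is not known to be bounded or of polynomial growth ($v$ is only in $W^{2,1,2}(\mathcal Q)$), so the vanishing of the conditional expectation of the stopped stochastic integral should be obtained by localization together with the parabolic Sobolev embedding (in dimension one, $W^{2,1,2}(\mathcal Q)$ embeds into $C(\overline{\mathcal Q})$, and $\partial_x v$ lies in a higher $L^p$ space), with Krylov's estimate, valid here by uniform ellipticity and boundedness of the coefficients, controlling $\mathbb E\bigl[\int_0^{T}\left|\partial_x v\,\sigma\right|^2(s,X_s^\xi)\,ds\bigr]$; likewise, the identity $-\partial_t v-\mathcal Lv=f$ a.e.\ on $\{v>0\}$ transfers to the time integral along the path only because the occupation measure of the nondegenerate diffusion does not charge Lebesgue-null sets, again by Krylov's estimate. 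These are repairable technical points, not gaps in the idea.
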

\begin{proof} The proof is based \textcolor{black}{on quite classical} arguments \textcolor{black}{and} we give it here for the reader's \textcolor{black}{convenience}. Let us  first consider a simple random variable $\xi^n \in \textcolor{black}{L}^2({\mathcal O}, \mathcal{F}_0)$, \textcolor{black}{being} such that there exists $n \in \mathbb{N}$, $A_1, A_2,...,A_n \in \mathcal{F}_0$ \textcolor{black}{and} $x_1,x_2,...,x_n \in {\mathcal O}$ such that
\begin{align}\label{simple}
\xi^n=\sum_{i=1}^n x_i \textbf{1}_{A_i} {\rm \,\, a.s.}
\end{align}
By using the definitions of $\xi^n$ and $v(t,x)$, we obtain
\begin{align*}
\underset{\tau \in \mathcal{T}([0,T])}\esssup\,\, \mathbb{E}\left[\int_0^{\tau \wedge \tau_{\mathcal O}^{\textcolor{black}{\xi^n}}}f(s,X_s^{\textcolor{black}{\xi^n}})ds | \mathcal{F}_0\right]&=\sum_{i=1}^n \textbf{1}_{A_i} \underset{\tau \in \textcolor{black}{\mathcal{T}^W([0,T])}}\sup\,\, \mathbb{E}\left[\int_0^{\tau \wedge \tau_{\mathcal O}^{\textcolor{black}{x_i}}}f(s,X_s^{\textcolor{black}{x_i}})ds \right] \nonumber \\
&=\sum_{i=1}^n \textbf{1}_{A_i} v(0,x_i)=v(0,\xi^n) {\rm \,\,a.s.}
\end{align*}
Now, in the general case, we approximate $\xi$ \textcolor{black}{by} a sequence of simple random variables $\xi^n$ of the form given by \eqref{simple}. The continuity of $v$ with respect to $x$ implies that
\begin{align}\label{first}
v(0,\xi^n) \textcolor{black}{\rightarrow} v(0, \xi) {\rm \,\, a.s.\,\, as\,\,} n\textcolor{black}{\rightarrow} \infty.
\end{align}

We have
\begin{align*}
&\left| \underset{\tau \in \mathcal{T}([0,T])}\esssup\,\, \mathbb{E}\left[\int_0^{\tau \wedge \tau_{\mathcal O}^{\textcolor{black}{\xi^n}}}f(s,X_s^{\textcolor{black}{\xi^n}})ds | \mathcal{F}_0\right]-\underset{\tau \in \mathcal{T}([0,T])}\esssup\,\, \mathbb{E}\left[\int_0^{\tau \wedge \tau_{\mathcal O}^{\textcolor{black}{\xi}}}f(s,X_s^{\textcolor{black}{\xi}})ds | \mathcal{F}_0\right] \right|  \nonumber\\
& \leq  \mathbb{E}\left[\int_0^{  \tau_{\mathcal O}^{\textcolor{black}{\xi}} \wedge \tau_{\mathcal O}^{\textcolor{black}{\xi^n}}}\left| f(s,X_s^{\textcolor{black}{\xi^n}})-f(s,X_s^{\textcolor{black}{\xi}})\right|ds | \mathcal{F}_0\right]+ \mathbb{E}\left[\int_{ \tau_{\mathcal O}^{\textcolor{black}{\xi^n}} \wedge\tau_{\mathcal O}^{\textcolor{black}{\xi}} }^{\tau_{\mathcal O}^{\textcolor{black}{\xi^n}}}|f(s,X_s^{\textcolor{black}{\xi^n}})|ds | \mathcal{F}_0\right] \nonumber \\
& +\mathbb{E}\left[\int_{ \tau_{\mathcal O}^{\textcolor{black}{\xi^n}} \wedge\tau_{\mathcal O}^{\textcolor{black}{\xi}} }^{\tau_{\mathcal O}^{\textcolor{black}{\xi}}}|f(s,X_s^{\textcolor{black}{\xi}})|ds | \mathcal{F}_0\right] \,\,\, {\rm a.s.}
\end{align*}
Since $\xi^n \textcolor{black}{\rightarrow} \xi$ a.s. as $n \textcolor{black}{\rightarrow} \infty$, we get that $\tau_{\mathcal O}^{\textcolor{black}{\xi^n}} \textcolor{black}{\rightarrow} \tau_{\mathcal O}^{\textcolor{black}{\xi}}$ a.s. as $n \textcolor{black}{\rightarrow} \infty$ due to the continuity property of the first passage time for elliptic diffusions (see \textcolor{black}{Proposition 4.4. in \cite{pardoux1998backward}}). Using the continuity property of the solution of the SDE with respect to the initial condition, together with \textcolor{black}{the assumptions on $f$ and Lebesgue Theorem}, it follows that
\begin{align}\label{second}
v(0,\xi^n) \textcolor{black}{\rightarrow} \underset{\tau \in \mathcal{T}([0,T])}\esssup\,\, \mathbb{E}\left[\int_0^{\tau \wedge \tau_{\mathcal O}^{\textcolor{black}{\xi}}}f(s,X_s^{\textcolor{black}{\xi}})ds | \mathcal{F}_0\right] {\rm \,\, a.s\,\, as\,\, } n \textcolor{black}{\rightarrow} \infty.
\end{align}
By \eqref{first} and \eqref{second} and the uniqueness of the limit, we get \eqref{res}.
\end{proof}

\bibliography{BibGB}
\bibliographystyle{agsm}
\end{document}